\documentclass[12pt,leqno]{article}

\input{style}  

\begin{document}

\title{Convex duality for stochastic singular
  control problems}
 
\author{P. Bank\\ Technische
  Universit{\"a}t Berlin\\
  Institut f{\"u}r Mathematik\\ Stra{\ss}e des 17. Juni 136, 10623
  Berlin, Germany\\
 \and
  H. Kauppila\\
  Columbia University in the City of New York\\
  Department of Mathematics\\
  2990 Broadway, New York, NY 10027
}
 \date{\today}

\maketitle
\begin{abstract}
  We develop a general theory of convex duality for certain singular
  control problems, taking the abstract results by~\citet{KramSch:99}
  for optimal expected utility from nonnegative random variables to
  the level of optimal expected utility from increasing, adapted
  controls. The main contributions are the formulation of a suitable
  duality framework, the identification of the problem's dual
  functional as well as the full duality for the primal and dual value
  functions and their optimizers. The scope of our results is
  illustrated by an irreversible investment problem and the
  Hindy-Huang-Kreps utility maximization problem for incomplete
  financial markets.
\end{abstract}
 
\begin{description}
\item[Keywords:] Convex duality, singular control, utility
  maximization, incomplete markets, irreversible investment.
\item[JEL Classification:] G11, G12, C61.
\item[AMS Subject Classification (2010):] 93E20, 91G80, 46N10, 91B08.
\end{description}

\section{Introduction}

A typical stochastic optimal control problem is formulated by
specifying how the dynamics of a given system can be influenced by a
controller to optimize some performance criterion. In classical
stochastic control the controller directly affects the coefficients
which govern the system's dynamics, but has no direct influence on the
system's state itself. In singular control problems, the controller
can, by contrast, directly change the state of the controlled system
at any time in a fully scalable way, from infinitesimal to large
jumps.

Ever since the seminal work on such singular problems by
\citet{BenesSheppWitsenhausen80} the most commonly used approach is to
consider Markovian systems and use dynamic programming to derive and
then solve the problem's Hamilton-Jacobi-Bellman equation which comes
in the form of a free-boundary value problem. Alternatively, one can
resort to versions of Pontryagin's maximum principle as first
discussed for stochastic singular control by
\citet{CadenillasHaussmann94}. In either case, the derived
mathematical concepts do not immediately solve the problem, but merely
help to describe some of the solution's properties. A key challenge is
then to work out this description as neatly as possible. Clearly, this
task is made easier when, as we shall assume, control can only be
exerted in one direction. Problems of this type include the monotone
follower of, e.g., \citet{KaratzasShreve84}, some irreversible
investment problems as discussed in
Section~\ref{sec:irreversibleInvestment}, but also optimal investment
and consumption problems with so-called Hindy-Huang-Kreps utilities
which we cover in Section~\ref{sec:HHK}.

All of these problems can be cast as maximization problems for
functionals $\UU$ of the form
\begin{equation*}
  \UU(C) = \E \int_0^\infty U_t(C_t) \,d\mu_t 
\end{equation*}
where $C$ is from the class $\cC$ of nonnegative, increasing,
left-continuous adapted controls, $U_t(C_t)$ describes the predictable
utility obtained at time $t \geq 0$ from the cumulative control $C_t$
and where the optional random measure $\mu$ describes the weights
assigned to utilities at different times. 

It is the purpose of this paper to develop a theory of convex duality
for singular control problems with target functionals of the above
type. Indeed, under natural assumptions on $U$ and $\mu$, our first
main Theorem~\ref{thm:1} establishes the Legendre-Fenchel duality of
the functional $\UU$. For this we introduce the class $\cD$ of
nonnegative, decreasing, right-continuous processes $D$ as dual
variables with the pairing
\begin{equation*}
  \E\abr{C,D}=\E\int_{[0,\infty)} D_t\,dC_t
\end{equation*}
and we show that the Legendre-Fenchel transform
\begin{equation*}
  \VV(D) = \sup_{C \in \cC} \cbr{\UU(C)-\E\abr{C,D}}
\end{equation*}
coincides with the functional
\begin{equation*}
  \VV(D)  = \inf_{\delta \in \bdot{\cD}(D)} 
  \E \int_0^\infty V_t(\delta_t) \,d\mu_t, 
\end{equation*}
where $\bdot{\cD}(D)$ is a certain class of optional processes
associated with $D$ and where $V_t$ denotes the classical
Legendre-Fenchel transform of $U_t$.  Moreover, we show that the
minimizer for $\VV(D)<\infty$ can be constructed in terms of a certain
envelope process of the form $\breve{D} = \int_.^\infty U'(C^D)\,d\mu$
with $C^D \in \cC$ which is characterized uniquely by
\begin{equation*}
  \condexp{\breve{D}_t} \leq \condexp{D_t} \text{ for all } t \geq 0,
\end{equation*}
with `$=$' holding true whenever $C^D$ is increasing. We thus obtain a
full characterization of the maximizers for
\begin{equation*}
  \UU(C) - \E\abr{C,D} = \E \int_0^\infty U_t(C_t) \,d\mu_t -
  \E\int_0^\infty D_t\,dC_t,
\end{equation*}
a general form, for instance, of irreversible investment problems as
described in Section~\ref{sec:irreversibleInvestment}.

For the treatment of constrained problems such as the
Hindy-Huang-Kreps optimal investment and consumption problem of
Section~\ref{sec:HHK} we formulate the abstract utility maximization
problem with value function
\begin{equation*}
  \mathbf{u}(x) = \sup_{C \in \cC(x)} \UU(C)
\end{equation*}
where, for $x>0$, controls are constrained to lie in $\cC(x) \subset
\cC$. This is assumed to be a convex class of feasible controls for
which a polar relation with sets $\cD(y) \subset \cD$, $y>0$, can be
established. This leads to the dual problems with value
\begin{equation*}
  \mathbf{v}(y) = \inf_{D \in \cD(y)} \VV(D)
\end{equation*}
for $y>0$. 

The celebrated papers by \citet{KramSch:99,KramSch:03} develop convex
duality for similarly abstract utility maximization problems where
utility is obtained at a single point in time, which in our setting
amounts to the choice of $\mu$ as a Dirac measure at some point
$T>0$. This leads to the obvious challenge to develop a similar convex
duality theory for our singular framework. This challenge is taken up
by our second main result, Theorem~\ref{thm:2}. While our proof of
this result follows to some extent the very useful blue-print laid out
by \citet{KramSch:99}, there are a number of novel obstacles to
overcome along the way. These are a consequence of our central
constraint of increasing adapted controls which in the setting of
Kramkov and Schachermayer corresponds to the considerably simpler
restriction to nonnegative $\cF_T$-measurable random variables. This
also distinguishes our work from \citet{KaratZitk:03} who consider
utility from consumption at nonnegative rates, i.e., without the
monotonicity constraint of our singular control set.

Specifically, a first key difference is in the structure of the
Legendre-Fenchel transform of the utility functional under
consideration: For Kramkov and Schachermayer's $C \mapsto \E U_T(C_T)$
the dual functional is simply $D\mapsto \E V_T(D_T)$ whereas the dual
$\VV$ of our functional $\UU$ involves an infimum. As a consequence,
the connection between the dual value $\VV(D)$ and the dual variable
$D$ is not as straight forward as in \cite{KramSch:99} but has to be
described by our envelope process $\breve{D}$. Also, the process $C^D$
which is conjugate to $D$ in the Legendre-Fenchel duality cannot be
directly written in terms of $D$, by contrast to~\cite{KramSch:99}
where one merely has to invert $U'_T(C^D_T)=D_T$. In addition, the
dual problem is not strictly convex anymore, a property which is
needed for some of the arguments in~\citet{KramSch:99}. As a remedy,
we introduce a subclass of $\cD(y)$ which is sufficiently large to
include the solutions to the dual problem, but small enough to ensure
strict convexity of $\VV$ on this subclass. This allows us to
establish the continuous dependence of certain solutions to the dual
problem on the Lagrange parameter $y$. The final challenge is then to
show that the corresponding candidate solutions for the primal problem
are indeed feasible for the larger class of all dual variables
$\cD(y)$.  Here, we have to resort to the general Legendre-Fenchel
duality between $\UU$ and $\VV$ developed in our first main
result. Finally, the notion of reasonable asymptotic elasticity
identified by~\cite{KramSch:99} as a key assumption for general
well-posedness of utility maximization problems has to be adapted to
account for the possibly very different utility functions $U_t$ at
different time points $t \geq 0$. In fact, in line with, e.g.,
\citet{BouchardPham04} and \citet{Zitk:05} we do allow for time- and
scenario-dependent utility functions and a stochastic clock which
allows us to include the finite time horizon case in the infinite time
horizon formulation in a simple manner; see the end of
Section~\ref{sec:HHK}.

The paper is organized as follows. In Section~\ref{sec:setup} we
introduce the class of controls $\cC$ and the space of dual variables
$\cD$ along with the assumptions and definition of our utility
functional $\UU$ and its dual $\VV$. Section~\ref{sec:results} is
devoted to the presentation of our main duality results,
Theorems~\ref{thm:1} and~\ref{thm:2}. Section~\ref{sec:illustrations}
illustrates these findings by a general irreversible investment
problem and by the optimal consumption problem of Hindy, Huang, and
Kreps. Section~\ref{sec:proofs} contains the proofs our main
theorems. Appendix~\ref{sec:convex-envelopes} gives the construction
of our envelope process $\breve{D}$. Appendix~\ref{sec:minimax}
discusses \citet{Zitkovic:10}'s notion of convex compactness in the
new context of our class of controls $\cC$ and provides a minimax
theorem compatible with this generalized notion of compactness.




\section{Controls and their performance measure}
\label{sec:setup}

We start by describing the control set $\cC$ and its dual $\cD$ as
well as our target utility functional $\UU$ along with a dual
functional $\VV$. As usual, we let $(\Omega,\cF,\FF,\P)$ denote
throughout a filtered probability space describing a controller's
beliefs $\P$ about future events $\cF$ along with his information flow
$\FF=(\cF_t)_{t \geq 0}$, a complete, right-continuous filtration
where $\cF_0$ is generated by the $\P$-null sets.

\subsection{Controls and their duals}
\label{sec:controls}

The set of conceivable controls will be given by the class $\cC$ of
predictable processes $C:\Omega \times [0,\infty] \to [0,\infty]$ with
non-decreasing, left-continuous paths starting from $C_0=0$. As usual
exercising control incurs costs which will be described by dual
variables. A convenient set of such dual variables will turn out to be
the class $\cD$ of all $\cF\otimes \cB([0,\infty])$-measurable
processes $D:\Omega \times [0,\infty] \to [0,\infty]$ with
non-increasing, right-continuous paths ending in $D_\infty=0$. Indeed,
for any $C \in \cC$ and $D \in \cD$ we can define
\begin{displaymath}
  \abr{C,D} \set \int_{[0,\infty)} D_t \,dC_t = - \int_{(0,\infty]} C_t
  \,dD_t,
\end{displaymath}
which yields the pairing
\begin{equation}
  \label{eq:1}
  \E\abr{C,D} = \E\int_{[0,\infty)} D_t \,dC_t =-\E\int_{(0,\infty]} C_t
  \,dD_t\in [0,\infty].
\end{equation}
Observe that the above identities are to be understood and hold with
the following conventions regarding the integration with respect to $C
\in \cC$ and $D \in \cD$:
\begin{itemize}
\item $dC$ and $dD$ do not charge the intervals $(\inf\cbr{t \geq 0
    \;:\; C_t=\infty},\infty]$ and $[0,\sup\cbr{t \geq 0\;:\;D_t=\infty})$,
  respectively;
\item the integration with respect to $dC$ is carried out taking into
  account a point mass of size $C_{0+} \set \lim_{t \downarrow 0} C_t$
  at $0$ and the integration with respect to $dD$ assumes a point mass
  $D_{\infty-} \set \lim_{t \uparrow \infty} D_t$ at $\infty$;
\item we let $0 \cdot \infty \set 0$ should an integrand be zero where
  the integrator puts an infinite point mass.
\end{itemize}
Finally, we note that both $\cC$ and $\cD$ can be endowed with the
metric which for two $\cF \otimes \cB([0,\infty])$-measurable
processes $A$, $B$ assigns the distance
\begin{equation}
  \label{eq:2}
  \dist(A,B) \set \E \int_0^\infty |h(A_t)-h(B_t)|\,d\mu_t
\end{equation}
where $h$ is any homeomorphism $[-\infty,\infty]\to[0,1]$. With
respect to this distance the pairing~\eqref{eq:1} is
lower-semicontinuous in each of its factors; see Lemma~\ref{lem:11}.

\subsection{Utilities and their conjugates}

The performance of controls will be measured by the utilities they
provide at each time, weighted with the controller's time preferences.

\begin{Assumption}\label{asp:1}
  The controller's time preferences are described by an optional random
  measure $\mu$ on $[0,\infty)$ without atoms, full support and finite
  expected total mass $\E \mu([0,\infty)) <\infty$.
 
 The controller's utility is specified by a mapping
 \begin{align*}
   U:\Omega \times [0,\infty) \times [0,\infty) &\to [0,\infty)\\
(\omega,t,c) & \mapsto U_t(\omega,c)
 \end{align*}
  with the following properties:
  \begin{enumerate}
  \item For any $(\omega,t) \in \Omega \times [0,\infty)$,
    $U_t(\omega,.)$ is continuous, strictly concave and strictly
    increasing from $U_t(\omega,0)=0$ to
    $U_t(\omega,\infty) \set \lim_{c \uparrow \infty} U_t(\omega,c)\in
    [0,\infty]$. Moreover, $U_t(\omega,.)$ is continuously
    differentiable and satisfies the Inada conditions
    \begin{equation*}
      U_t'(\omega,0) \set \lim_{c \downarrow 0} U_t'(\omega,c)=\infty
      \text{ and } U_t'(\omega,\infty) \set \lim_{c \uparrow \infty} U'_t(\omega,c)=0.
    \end{equation*}
  \item For any $c \geq 0$, $(\omega,t) \mapsto U_t(\omega,c)$ is predictable with $\E
    \int_0^\infty U_t(c)\,d\mu_t<\infty$. \item The asymptotic
    elasticity of $U$ is uniformly less than one in
    the sense that there is a constant
    $\gamma \in (0,1)$ and a predictable process $C^\gamma \geq 0$ with $\E
    \int_0^\infty U_t(C^\gamma_t) \,d\mu_t<\infty$ such that for
    any  $(\omega,t) \in \Omega \times [0,\infty)$ we have 
    \begin{equation} \label{eq:3}
      \frac{cU_t'(\omega,c)}{U_t(\omega,c)} < \gamma<1 \text{ for all } c
      > C^\gamma_t(\omega).
    \end{equation}
  \end{enumerate}
\end{Assumption}

A control will provide an expected utility of the form
\begin{equation}
  \label{eq:4}
  \UU(C) \set \E \int_0^\infty U_t(C_t) \,d\mu_t \in [0,\infty], \quad C \in \cC.
\end{equation}
Note that in our setting, contrary to what is more commonly assumed,
utility $U_t(C_t)$ at each time $t \geq 0$ is obtained from the
\emph{cumulative} control $C_t$ rather than the current control
rate. This turns the optimization problem to be introduced shortly
into a singular stochastic control problem. We refer to the
illustrations of Section~\ref{sec:illustrations} for the motivation
and scope of such utility functionals.

\begin{Remark} 
  Let us briefly comment on our preference Assumption~\ref{asp:1}:
  \begin{enumerate}
  \item The first item just requires that time- and scenario-wise the
    utility function is standard, except for the requirement that
    utility at zero vanishes. In fact, this comes without loss of
    generality if $\E\int_0^\infty |U_t(0)| \,d\mu_t<\infty$ since
    then we can pass to $\tilde{U} \set U-U(0)$ without changing the
    utility maximization problem for~\eqref{eq:4}. 

  \item The predictability requirement in the second item is
    essentially without loss of generality since we could work with
    the predictable projection of any non-predictable field $(U(c),
    c\in[0,\infty))$ without changing~\eqref{eq:4}. This holds because
    controls are predictable and because time preferences are optional
    random measures without atoms.
  \item It is well-known from the work of
    \citet{KramSch:99} that asymptotic elasticity less
    than one is necessary to avoid ill-posed utility maximization
    problems. Their Lemma~6.3 shows that for any $(\omega,t) \in
    \Omega \times [0,\infty)$, condition~\eqref{eq:3} is equivalent to
    \begin{equation} \label{eq:5} 
       U_t(\omega,\lambda c) <
      \lambda^\gamma U_t(\omega,c) \text{ for all } \lambda > 1 \text{
        and all } c > C^\gamma_t(\omega).
    \end{equation}
  \item Our results will also allow us to treat the case of a possibly
    finite time horizon given by some stopping time $\tau$; see
    Remark~\ref{rem:1} below.
  \end{enumerate}
\end{Remark}

One of the main results in this paper is the convex duality for the
functional $\UU$ of~\eqref{eq:4} which will be established in
Theorem~\ref{thm:1} below. For this we need to introduce a dual
functional $\VV$ on $\cD$. This functional will be specified in terms
of the classical Legendre-Fenchel transform $V$ of $U$:
\begin{equation}
  \label{eq:6}
  V_t(\omega,d) \set \sup_{0 \leq c <\infty}\cbr{U_t(\omega,c)-c d}, \quad d>0.
\end{equation}
It is well-known that under the conditions in Assumption~\ref{asp:1}
$V_t(\omega,.)$ is a strictly convex and decreasing function on
$(0,\infty)$ with 
\begin{displaymath}
V_t(\omega,0) \set \lim_{d \downarrow 0}
V_t(\omega,d) = U_t(\omega,\infty) \text{ and } V_t(\omega,\infty) \set
\lim_{d \uparrow \infty} V_t(\omega,d) = U_t(\omega,0)=0. 
\end{displaymath}
Moreover, $V_t(\omega,.)$ is continuously differentiable on
$(0,\infty)$ and satisfies the Inada conditions
\begin{equation*}
  V_t'(\omega,0) \set \lim_{d \downarrow 0} V_t'(\omega,d)=-\infty
  \text{ and } V_t'(\omega,\infty) \set \lim_{d \uparrow \infty} V_t'(\omega,d)=0.
\end{equation*} 
The asymptotic elasticity conditions~\eqref{eq:3} and~\eqref{eq:5} can
be cast in terms of $V$ as
\begin{equation} \label{eq:7} 
  (1-\gamma)(-V'(d))d \leq \gamma V(d) 
 \text{  for all } 0 < d < D^{\gamma}
\end{equation} 
and
\begin{equation}\label{eq:8} 
  V((1-\epsilon) d) <
  (1-\epsilon)^{\frac{\gamma}{1-\gamma}} V(d) \text{ for all }
  0<\epsilon<1, \; 0<d < D^{\gamma}
\end{equation}
with the same $\gamma \in (0,1)$ as before and $D^{\gamma} \set
U'(C^{\gamma})$ ; see Lemma~6.3 in~\citet{KramSch:99}. Finally, along
with $U$ also $V$ is predictable and we have the following conjugacy
relations:
\begin{enumerate}
\item In addition to~\eqref{eq:6}, we also have
    \begin{equation}\label{eq:9}
    U_t(\omega,c) = \inf_{0 \leq d <\infty} \cbr{V_t(\omega,d)+cd}, \quad c>0.
      \end{equation}
\item The supremum in~\eqref{eq:6} is attained at $c=-V_t'(\omega,d)$.
\item The infimum in~\eqref{eq:9} is attained at $d=U_t'(\omega,c)$.
\end{enumerate}
In fact, the identities in items 2. and 3. are equivalent. 

We now can introduce the dual functional
\begin{equation}
  \label{eq:10}
  \VV(D) \set \inf_{\delta \in \bdot{\cD}(D)} 
  \E \int_0^\infty V_t(\delta_t) \,d\mu_t \in [0,\infty], \quad D \in \cD\,,
\end{equation}
with 
\begin{equation}
   \label{eq:11}
   \bdot{\cD}(D) \set \cbr{\delta\geq 0 \text{ optional} \;:\; 
     \opt{\left(\int_.^\infty \delta \,d\mu\right)} \leq \opt{D}},
 \end{equation}
 where we used the notation $\opt{X}$ for the optional projection of any
 $\cF \otimes \cB([0,\infty])$-measurable process $X\geq 0$.

\section{Main results}
\label{sec:results}

\subsection{Legendre-Fenchel duality for utility functionals}

For the statement of our duality theorem for $\UU$ and $\VV$
of~\eqref{eq:4} and~\eqref{eq:10} we have
to introduce for any dual process $D \in \cD$ a special envelope
process $\breve{D}$ of the form
\begin{equation}
  \label{eq:12}
  \breve{D}_t = \int_t^\infty U'(C^{\breve{D}})\,d\mu, \; t \geq 0, \text{ for some }
  C^{\breve{D}} \in \cC
 \end{equation}
which satisfies $\P$-almost surely
\begin{equation}
  \label{eq:13}
  \opt{\breve{D}}_t \leq \opt{D}_t \text{ for any $t \geq 0$, with ``$=$" if } dC^{\breve{D}}_t>0.
\end{equation}
Here, we follow the convention that, for $C \in \cC$, we write
$dC_t>0$ iff $t$ is a point of increase for $C$ in the sense that
$C_t<C_{s+t}$ for any $s > 0$. We refer to Lemma~\ref{lem:10} of
Appendix~\ref{sec:convex-envelopes} for existence and uniqueness up to
indistinguishability of such an envelope process.

Note that the paths of such an envelope process $\breve{D}$ are
absolutely continuous with respect to $\mu$. We choose
\begin{equation}
  \label{eq:14}
  \bdot{\breve{D}} \set  -U'(C^{\breve{D}})  
\end{equation} 
for the corresponding density which is then uniquely determined up to
indistinguishability because so is the process $C^{\breve{D}} \in \cC$
with~\eqref{eq:12} and~\eqref{eq:13}. Observe, that, conversely, we
can then write $C^{\breve{D}} = -V'(-\bdot{\breve{D}})$ by the
conjugacy relations between $U$ and $V$ recalled above.

We now can state our first main result as follows:

\begin{Theorem}\label{thm:1}
Under Assumption~\ref{asp:1} the following assertions hold:
\begin{enumerate}
\item The functionals $\UU$ of~\eqref{eq:4} and $\VV$
  of~\eqref{eq:10} are conjugate to each other in the sense that we
  have
  \begin{equation}
  \label{eq:15}
  \UU(\hat{C}) = \inf_{\VV(D)<\infty}
  \cbr{\VV(D)+\E\abr{\hat{C},D}} \text{ for any } \hat{C} \in \cC
 \end{equation}
 and
 \begin{equation}
   \label{eq:16}
   \VV(\hat{D})=\sup_{\UU(C)<\infty}
   \cbr{\UU(C)-\E\abr{C,\hat{D}}} \text{ for any } \hat{D} \in \cD\,.
 \end{equation}

\item If finite, the infimum in~\eqref{eq:15} is attained for
  precisely those $D \in \cD$ whose (joint) envelope process
  $\breve{D}$ with~\eqref{eq:12} and~\eqref{eq:13} is given by
  \begin{equation}
    \label{eq:17}
    \bdot{\breve{D}}= -U'(\hat{C}) .
  \end{equation}

\item If finite, the supremum in~\eqref{eq:16} is attained exactly for
  \begin{equation}
    \label{eq:18}
    \hat{C} =-V'(-\bdot{\breve{D}}) \in \cC
  \end{equation}
  where $\breve{D}$ is the envelope process of $\hat{D}$ characterized
  by~\eqref{eq:12} and~\eqref{eq:13} with $D\set\hat{D}$.
\end{enumerate}
\end{Theorem}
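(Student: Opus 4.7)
The proof plan rests on two ingredients: the pointwise Fenchel inequality $U_t(\omega,c)\leq V_t(\omega,d)+cd$, with equality iff $d=U_t'(\omega,c)$, and a Fubini-type identity that, for predictable left-continuous $C$ and optional $\delta$, converts $\E\int_0^\infty C_t\delta_t\,d\mu_t$ into a pairing against the right-continuous decreasing process $\Phi_s \set \int_s^\infty \delta\,d\mu$. Weak duality comes from combining these; attainment and the envelope characterization follow by exhibiting concrete candidates that saturate every inequality.

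\textbf{Step 1 (weak duality).} Fix $C\in\cC$, $D\in\cD$, and $\delta\in\bdot{\cD}(D)$. Integrating the pointwise Fenchel inequality against $d\mu$ and taking expectation yields
\[
\UU(C) \leq \E\int_0^\infty V_t(\delta_t)\,d\mu_t + \E\int_0^\infty C_t\delta_t\,d\mu_t.
\]
Since $C_0=0$ and $C$ is left-continuous, Fubini (exploiting the atomlessness of $\mu$) rewrites the mixed term as $\E\int_{[0,\infty)}\Phi_s\,dC_s$. Because $\Phi$ is continuous and $C$ is predictable, the optional-projection identity gives $\E\int \Phi\,dC = \E\int \opt{\Phi}\,dC \leq \E\int \opt{D}\,dC = \E\abr{C,D}$, the inequality being the defining property~\eqref{eq:11} of $\bdot{\cD}(D)$. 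Taking the infimum over $\delta$ and rearranging yields $\UU(C) - \E\abr{C,D} \leq \VV(D)$, which is ``$\leq$'' in~\eqref{eq:15} and ``$\geq$'' in~\eqref{eq:16}.

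\textbf{Step 2 (attainment).} For~\eqref{eq:15} with $\UU(\hat{C})<\infty$, set $\breve{D}_t \set \int_t^\infty U'(\hat{C})\,d\mu$. This lies in $\cD$ and satisfies~\eqref{eq:12}--\eqref{eq:13} with $C^{\breve{D}}=\hat{C}$ and equality in~\eqref{eq:13} throughout. Plugging $\delta \set -\bdot{\breve{D}} = U'(\hat{C})$ into Step~1 turns every inequality into equality (Fenchel equality from $V_t(U_t'(c))+cU_t'(c)=U_t(c)$, and $\opt{\Phi}=\opt{\breve{D}}$ by construction), giving $\VV(\breve{D})+\E\abr{\hat{C},\breve{D}} \leq \UU(\hat{C})$, which closes the gap. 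For~\eqref{eq:16} with $\VV(\hat{D})<\infty$, Lemma~\ref{lem:10} produces the envelope $\breve{D}$ of $\hat{D}$; set $\hat{C} \set -V'(-\bdot{\breve{D}}) = C^{\breve{D}} \in \cC$. The inclusion $\bdot{\cD}(\breve{D}) \subseteq \bdot{\cD}(\hat{D})$ from~\eqref{eq:13} gives $\VV(\hat{D}) \leq \E\int V(-\bdot{\breve{D}})\,d\mu$; Fenchel equality rewrites this as $\UU(\hat{C}) - \E\int \hat{C}(-\bdot{\breve{D}})\,d\mu$, and the Fubini identity converts the last expectation into $\E\int \opt{\breve{D}}\,d\hat{C}$. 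Since $\opt{\breve{D}}=\opt{\hat{D}}$ on $\supp(d\hat{C})$ by~\eqref{eq:13}, this equals $\E\abr{\hat{C},\hat{D}}$, so $\VV(\hat{D}) \leq \UU(\hat{C}) - \E\abr{\hat{C},\hat{D}}$, closing the gap. Uniqueness of the optimizers in items~2 and~3 comes from strict concavity of $U_t$ (resp.\ strict convexity of $V_t$): any optimizer must satisfy the pointwise first-order condition $\delta=U'(\hat{C})$ $d\mu$-a.e.\ (resp.\ $\hat{C}=-V'(-\bdot{\breve{D}})$), and equality in the projection step forces exactly~\eqref{eq:13}, which pins the candidate to the envelope-based one and gives~\eqref{eq:17},~\eqref{eq:18}.

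\textbf{Anticipated obstacle.} The most delicate point is Step~1's Fubini-and-projection manipulation: one has to verify integrability of $\Phi$ against $dC$, justify the optional-projection identity (which leans on continuity of $\Phi$ granted by the atomlessness of $\mu$), and control the auxiliary term $\hat{C}\,U'(\hat{C})$ appearing in Step~2 via the asymptotic elasticity bound~\eqref{eq:7} so that finiteness of $\UU(\hat{C})$ or $\VV(\hat{D})$ propagates to every intermediate quantity. Existence and uniqueness of the envelope process itself is postponed to Appendix~\ref{sec:convex-envelopes}.
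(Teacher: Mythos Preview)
Your plan follows the paper's route closely: weak duality via the pointwise Fenchel inequality combined with a Fubini/optional-projection identity, then attainment by plugging in the envelope-based candidates. Two genuine gaps remain, however.

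First, Step~2 for \eqref{eq:16} treats only $\VV(\hat{D})<\infty$; the identity must also hold when $\VV(\hat{D})=\infty$, and there your candidate $\hat{C}=-V'(-\bdot{\breve{D}})$ need not lie in $\{\UU<\infty\}$. The paper (Lemma~\ref{lem:3}) fixes this by truncation: with $C^n\set\hat{C}\wedge n$ one computes $\UU(C^n)-\E\abr{C^n,\hat{D}}=\E\int V_n(-\bdot{\breve{D}})\,d\mu$, where $V_n(d)\set\sup_{0\le c\le n}\{U(c)-cd\}$, and then lets $n\uparrow\infty$ by monotone convergence of $V_n\nearrow V$.

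Second, your argument for the ``only if'' in item~2 (``any optimizer must satisfy the pointwise first-order condition $\delta=U'(\hat{C})$\,'') presupposes that the infimum in the definition \eqref{eq:10} of $\VV(D)$ is \emph{attained} at some $\delta\in\bdot{\cD}(D)$; otherwise there is no specific $\delta$ in the Fenchel chain to which the first-order condition applies. The paper isolates this as a key auxiliary lemma (Lemma~\ref{lem:1}): the envelope density $-\bdot{\breve{D}}$ always minimizes in \eqref{eq:10}, proved again via the truncations $V_n$ and a careful integrability argument. You could alternatively bypass this by reducing item~2 to the uniqueness already established in item~3 (if $D$ attains \eqref{eq:15}, then both $\hat{C}$ and the envelope-$C$ of $D$ attain the supremum in \eqref{eq:16} with $\hat{D}\set D$, so strict concavity forces them to coincide), but your sketch does not take that route either.
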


\subsection{Convex duality for an abstract utility maximization problem}
\label{sec:abstractDuality}

Let us now formulate an abstract utility maximization problem in a
similar way as in the approach for utility from terminal wealth by
\citet{KramSch:99}. To this end we consider $\cC(1) \subset \cC$ and
$\cD(1) \subset \cD$ which are polar with respect to each other in the
sense that
\begin{enumerate}
\item For any $C \in \cC$, we have $C \in \cC(1)$ iff $\E \abr{C,D}
  \leq 1$ for any $D \in \cD(1)$.
\item For any $D \in \cD$, we have $D \in \cD(1)$ iff $\E \abr{C,D}
  \leq 1$ for any $C \in \cC(1)$.
\end{enumerate}
To avoid trivialities we also assume
\begin{enumerate}
\item[3.] $\cC(1) \supset \cbr{\mathbf{1}}$ where $\mathbf{1} \in \cC$
  denotes the control with $\mathbf{1}_0(\omega)\set 0$ and
  $\mathbf{1}_t(\omega) \set 1$, $t\in (0,\infty]$, $\omega\in\Omega$.
\item[4.] $\cD(1) \not= \cbr{\mathbf{0}}$ where $\mathbf{0} \in
  \cD$ is the trivial state-price deflator given by
  $\mathbf{0}_t(\omega)\set 0$, $t \in [0,\infty]$, $\omega \in \Omega$.
\end{enumerate}

The set $\cC(1)$ will play the role of the budget set for wealth $x =
1$ and $\cD(1)$ can be viewed as a set of state price deflators $D \in
\cD$ (induced, e.g., by a financial market model) for which, in
particular, $\E D_0=\E \abr{\mathbf{1},D} \leq y = 1$.

To formulate the abstract utility maximization problem and its dual
let us put
\begin{equation*}
  \cC(x) \set x \cC(1) \text{ for $x>0$ and } \cD(y) \set y \cD(1)
  \text{ for $y>0$}.
\end{equation*}
It is clear that $\cC(x)$ and $\cD(y)$ inherit the polar relation from
$\cC(1)$ and $\cD(1)$ for any $x,y>0$. By this relation it is also
obvious that these sets are convex and solid (i.e., e.g., with $C \in
\cC(x)$, any $\tilde{C} \in \cC$ with $\tilde{C} \leq C$ is also
contained in $\cC(x)$). Moreover, the lower-semicontinuity of the
pairing $\E\abr{C,D}$, see Lemma~\ref{lem:11}, ensures that $\cC(x)$
and $\cD(y)$ are closed with respect to convergence in the
metric~\eqref{eq:2}.

Finally, let us introduce the value functions
\begin{equation}
  \label{eq:19}
  \mathbf{u}(x) \set \sup_{C \in \cC(x)} \UU(C) \;,\quad x > 0,\  
\end{equation}
and
\begin{equation}
  \label{eq:20}
  \mathbf{v}(y) \set \inf_{D \in \cD(y)} \VV(D) \;, \quad y>0.  
\end{equation}

\begin{Theorem}\label{thm:2}
  Suppose that Assumption~\ref{asp:1} holds true and assume that
  $\mathbf{u}(x)<\infty$ for some $x>0$. Then we have:
\begin{enumerate}
\item The value functions $\mathbf{u}$ of~\eqref{eq:19} and
  $\mathbf{v} $ of~\eqref{eq:20} are real-valued and conjugate to each
  other in the sense that
  \begin{equation}
    \label{eq:21}
    \mathbf{u}(x) = \inf_{y>0} \{\mathbf{v}(y)+xy\}  \text{ for any } x>0
  \end{equation}
  and
  \begin{equation}
    \label{eq:22}
    \mathbf{v}(y) =
    \sup_{x > 0} \{\mathbf{u}(x)-xy\} \text{ for any } y>0\,.
  \end{equation}
  Moreover, $\mathbf{u}$ and $\mathbf{v}$ are continuously
  differentiable on $(0,\infty)$ and satisfy the Inada conditions
  \begin{equation}
    \label{eq:23}
    \mathbf{u}'(0)=\infty, \;     \mathbf{u}'(\infty)=0, \;
    \mathbf{v}'(0)=-\infty, \;     \mathbf{v}'(\infty)=0.    
  \end{equation}
  In addition, $\mathbf{u}$ and $\mathbf{v}$ are, respectively, strictly concave and strictly
  convex, and $y$ attains the infimum in~\eqref{eq:21} iff $x$ attains
  the supremum in~\eqref{eq:22} which in turn is equivalent to both
  \begin{equation}
    \label{eq:24}
    \mathbf{u}'(x)=y \text{ and } \mathbf{v}'(y)=-x\,.
  \end{equation}

\item The infimum in the dual problem~\eqref{eq:20} is attained for
  any $y>0$. All the minimizers $D$ of~\eqref{eq:20} have the same
  envelope process $\breve{D}^y \in \cD(y)$ with~\eqref{eq:12}
  and~\eqref{eq:13}, and, for $x$ given by~\eqref{eq:24},
  \begin{equation}
    \label{eq:25}
    C^x = -V'(-\bdot{\breve{D}}^y) \in \cC(x)
  \end{equation}
  attains the supremum in the primal problem~\eqref{eq:19}.

\item The supremum in the primal problem~\eqref{eq:19} is attained for
  any $x>0$ at a unique $C^x \in \cC(x)$ and, for $y$ given
  by~\eqref{eq:24},
  \begin{equation}
    \label{eq:26}
    \bdot{\breve{D}}^y = -U'(C^x)
  \end{equation}
  yields via~\eqref{eq:12} a $\breve{D}^y \in \cD(y)$ which attains
  the infimum in the dual problem~\eqref{eq:20}.
\end{enumerate}
\end{Theorem}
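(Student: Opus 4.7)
The plan is to adapt the Kramkov--Schachermayer blueprint from \cite{KramSch:99}, using our pointwise Legendre--Fenchel duality of Theorem~\ref{thm:1} as the linchpin. Weak duality is immediate: for $C\in\cC(x)$ and $D\in\cD(y)$, Theorem~\ref{thm:1} gives $\UU(C)\le\VV(D)+\E\abr{C,D}$ and the polar relation yields $\E\abr{C,D}\le xy$, hence $\mathbf u(x)\le\mathbf v(y)+xy$ for all $x,y>0$. The hypothesis $\mathbf u(x_0)<\infty$ combined with reasonable asymptotic elasticity (item~3 of Assumption~\ref{asp:1}, equivalently~\eqref{eq:8} for $V$) then yields finiteness of $\mathbf u$ and $\mathbf v$ on all of $(0,\infty)$, essentially as in \cite{KramSch:99}.

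Existence of a dual minimizer comes next. For a minimizing sequence $(D^n)\subset\cD(y)$ I would invoke the convex compactness of $\cD(y)$ developed in Appendix~\ref{sec:minimax} in the spirit of \cite{Zitkovic:10}, extracting via a Koml\'os-type argument a sequence of forward convex combinations converging in the metric~\eqref{eq:2} to some $D^\infty\in\cD(y)$. Lower semicontinuity of $\VV$ along this convergence, obtained by Fatou applied to $V_t(\delta^n_t)\,d\mu_t$ for near-optimal densities $\delta^n\in\bdot{\cD}(D^n)$ from~\eqref{eq:11}, then yields $\VV(D^\infty)=\mathbf v(y)$.

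The principal obstacle, as flagged in the introduction, is that $\VV$ is \emph{not} strictly convex on $\cD(y)$: different $D$'s sharing the same envelope produce the same dual value. My remedy is to restrict to the subclass $\breve{\cD}(y)\subset\cD(y)$ of envelope processes, large enough to contain every dual minimizer (by Lemma~\ref{lem:10} the envelope $\breve D$ of any $D\in\cD(y)$ again lies in $\cD(y)$ with $\VV(\breve D)=\VV(D)$) yet small enough to enjoy strict convexity of $\VV$: distinct envelopes correspond via~\eqref{eq:12} to distinct densities $-\bdot{\breve D}=U'(C^{\breve D})$, and pointwise strict convexity of $V$ (item~1 of Assumption~\ref{asp:1}) propagates to a strict inequality $\VV(\lambda\breve D^{(1)}+(1-\lambda)\breve D^{(2)})<\lambda\VV(\breve D^{(1)})+(1-\lambda)\VV(\breve D^{(2)})$ whenever $\breve D^{(1)}\ne\breve D^{(2)}$. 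Uniqueness of the optimal envelope $\breve D^y$ and, via a standard perturbation argument, continuous dependence of $\breve D^y$ on $y$ follow, and in turn deliver $C^1$-regularity of $\mathbf v$.

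The final and most delicate step is to verify that the candidate $C^x\set-V'(-\bdot{\breve D^y})$ produced by item~3 of Theorem~\ref{thm:1} actually lies in $\cC(x)$, not merely in $\cC$. By that item one has $\UU(C^x)=\VV(\breve D^y)+\E\abr{C^x,\breve D^y}$, so weak duality is saturated at $(x,y)$ once one shows $\E\abr{C^x,\breve D^y}=xy$ for the Lagrange parameter $x\set-\mathbf v'(y)$; this identity is the first-order condition at the dual optimum, obtained by perturbing $\breve D^y$ along admissible directions in $\cD(y)$. The same perturbation argument applied to an arbitrary competitor $yD'\in\cD(y)$ with $D'\in\cD(1)$ shows $\E\abr{C^x,D'}\le x$, and hence by the polar characterization $C^x\in\cC(x)$. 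Primal optimality of $C^x$ and the conjugacy~\eqref{eq:21}--\eqref{eq:22} then follow, the Inada conditions~\eqref{eq:23} and strict concavity/convexity of $\mathbf u,\mathbf v$ are read off from these together with strict convexity of $\VV$ on $\breve{\cD}(y)$, and uniqueness of the primal optimizer $C^x$ is the image of uniqueness of $\breve D^y$ under the bijection~\eqref{eq:14}.
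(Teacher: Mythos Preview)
Your proposal follows the same Kramkov--Schachermayer blueprint as the paper and gets the architecture right: weak duality from Theorem~\ref{thm:1}, existence of a dual minimizer via Koml\'os-type compactness, strict convexity on a suitable subclass, differentiability of $\mathbf v$, and finally feasibility of $C^x$ via a perturbation argument. Your envelope subclass $\breve{\cD}(y)$ is essentially the paper's density class $\bdot{\cD}(y)$ of~\eqref{eq:41} in disguise (Lemma~\ref{lem:4}): since $\breve D=\int_\cdot^\infty(-\bdot{\breve D})\,d\mu$, working with envelopes or with their densities is equivalent, and the paper chooses densities because strict convexity of $V$ acts on them directly.

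There is, however, one genuine gap. You never explain how to obtain the \emph{strong} duality~\eqref{eq:22}, and with it the finiteness of $\mathbf v$. Your sentence ``finiteness of $\mathbf u$ and $\mathbf v$\dots essentially as in \cite{KramSch:99}'' hides a circularity: in \cite{KramSch:99} (their Lemma~3.4), and in the present paper (Lemma~\ref{lem:6}), $\mathbf v(y)<\infty$ is deduced \emph{from} the conjugacy $\mathbf v(y)=\sup_x\{\mathbf u(x)-xy\}$ together with $\mathbf u(x)/x\to 0$. Weak duality only gives the lower bound $\mathbf v(y)\ge\sup_x\{\mathbf u(x)-xy\}$. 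Your alternative route---derive conjugacy \emph{a posteriori} from having constructed a saddle point $(C^x,\breve D^y)$---requires $\mathbf v(y)<\infty$ just to talk about a dual minimizer, and further requires the Inada conditions on $\mathbf v'$ to ensure that $x=-\mathbf v'(y)$ sweeps out all of $(0,\infty)$; both of these you are trying to establish. The paper breaks this circle in Lemma~\ref{lem:5} by a minimax argument: it applies Theorem~\ref{thm:3} to $\HH(C,D)=\UU(C)-\E\abr{C,D}$ on the convexly compact sets $\cC_n=\{C\in\cC:C_\infty\le n\}$ against $\cD(y)$, and then lets $n\uparrow\infty$ using $V_n\nearrow V$ to identify the limit with $\mathbf v(y)$. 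This is the one structural ingredient your proposal is missing; once it is in place, the rest of your outline matches Lemmas~\ref{lem:4} and~\ref{lem:6}--\ref{lem:9} essentially verbatim. (A minor point: the convex compactness of Appendix~\ref{sec:minimax} is stated for bounded subsets of $\cC$, not $\cD(y)$; the paper's compactness argument for the dual in Lemma~\ref{lem:4} works directly at the level of densities $\delta^n\in\bdot{\cD}(y)$ via Lemma~A1.1 of \cite{DelbSch:94} and Fatou, rather than invoking convex compactness of $\cD(y)$.)
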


\section{Illustrations}
\label{sec:illustrations}

Let us illustrate the usefulness of Theorems~\ref{thm:1}
and~\ref{thm:2} by showing how they can be brought to bear on the
classical problems of irreversible investment and of optimal
consumption and investment.

\subsection{Irreversible investment}
\label{sec:irreversibleInvestment}

Consider the manager of a firm who can decide at any point in time $t
\geq 0$ whether or not to expand the currently installed capacity of
production $C_t$. Assuming that installed capacity cannot be reduced
in a profitable way amounts to the assumption that $C \in \cC$ as
introduced in Section~\ref{sec:controls}. Let us suppose that the
revenues $R^C_t$ from the firm's production are an increasing function
of installed capacity and exhibit decreasing returns to
scale. Plainly, it is perfectly reasonable to assume that revenues
also depend on the product's price fluctuations and possibly other
stochastically evolving market conditions. It thus makes sense to
assume that, at time $t \geq 0$, the revenues from a capacity
expansion policy $C \in \cC$ are given as
\begin{equation*}
  R^C_t(\omega) = U_t(\omega,C_t(\omega))
\end{equation*}
for some function $U:\Omega \times [0,\infty) \times [0,\infty) \to
[0,\infty)$ as considered in Assumption~\ref{asp:1}. The manager
discounts future cash flows at some rate $r=(r_t)_{t \geq 0}$, an
optional process with $\int_0^t |r_s|\,ds<\infty$, $t \geq 0$, which
we assume to be such that the random measure
\begin{equation*}
  \mu(dt) \set e^{-\int_0^t r_s\,ds}dt
\end{equation*}
has finite expected mass $\E \mu(0,\infty)<\infty$. 

The expected total discounted revenue is then given by
\begin{equation*}
\E\int_0^\infty e^{-\int_0^t r_s\,ds}R^C_t \,dt= \E \int_0^\infty U_t(C_t) \,d\mu_t=  \UU(C) 
\end{equation*}
exactly as considered in~\eqref{eq:4}. If we now assume that the
(discounted) cost of expanding production capacity by one unit at time
is described by a class (D) supermartingale $Z \geq 0$ with
$Z_\infty=0$ we are led to consider the manager's optimization
problem:
\begin{equation}
  \label{eq:27}
  \text{Maximize } \UU(C) - \E\int_0^\infty Z_t \,dC_t \text{ subject to } C \in \cC.
\end{equation}
This kind of singular control problem is of great interest in
Economics. We refer to~\citet{Alvarez:11} for a more extensive account
of the pertaining literature.

Recalling the Doob-Meyer decomposition $Z=M-A$ into a uniformly
integrable martingale $M$ and a predictable increasing process $A$
with $A_0=0$, we find that $\hat{D} \set M_\infty-A$ is contained in $\cD$
and satisfies
\begin{equation*}
  \E\int_0^\infty Z_t \,dC_t = \E\int_0^\infty\opt{\left(M_\infty-A_t\right)} \,dC_t =
  \E\abr{C,\hat{D}}, \quad C \in \cC.
\end{equation*}
By Theorem~\ref{thm:1}, the value of problem~\eqref{eq:27} is thus
given by the dual functional $\VV(\hat{D})$ of~\eqref{eq:16} and, if it is
finite, we obtain that the optimal capacity expansion plan is
$\hat{C}$ with~\eqref{eq:18}. In particular, an explicit solution
to~\eqref{eq:27} can be given whenever the envelope process
$\breve{D}$ associated with $\hat{D}$ can be computed explicitly. We
refer to \citet{Ferrari:14, Ferrari:14b, BankRiedel01, BankBaumgarten:10} for
such examples.

\subsection{Hindy-Huang-Kreps utility}
\label{sec:HHK}

Following the seminal work of \citet{Mert:71}, the problem of optimal
investment and consumption in continuous-time is mostly studied for
utility functions which depend on the current consumption rate. This
modeling approach was shown by Hindy, Huang, and Kreps (see
\cite{HindyHuangKreps92, HindyHuang92, HindyHuang93}) to fail to
exhibit the economically desirable property of intertemporal
substitution: in Merton's setting, slight shifts in the timing of
consumption plans may lead to significant changes in the utility
associated with these plans. As a remedy, these authors proposed to
consider functionals where utility is derived from a level of
satisfaction, i.e., a weighted average of past consumption such as
\begin{equation*}
  Y^{\tilde{C}}_t \set \int_0^t e^{-\int_s^t\beta_u \,du} \,d\tilde{C}_s, \quad t \geq 0, 
\end{equation*}
where $\tilde{C} \in \cC$ describes the cumulative consumption and
where the locally Lebesgue-integrable adapted process $\beta \geq 0$
measures the decay rate of satisfaction. The utility functional to be
maximized is then
\begin{equation*}
  \tilde{\UU}(\tilde{C}) \set \E \int_0^\infty \tilde{U}(Y^{\tilde{C}}_t) \,d\mu_t
\end{equation*}
where $\tilde{U}:[0,\infty) \to \RR$ is a strictly concave and
increasing utility function of class $C^1$ satisfying the Inada
conditions $\tilde{U}'(0)=\infty$ and $\tilde{U}'(\infty)=0$;
$\mu$, as before, describes an agent's time-preferences and could, for
instance, be specified as $\mu(dt)=e^{-\delta t}dt$ with $\delta>0$.

As usual, the set of consumption plans at the agent's disposal is
determined by his investment opportunities.  Assuming the mild
assumption of no free lunch with vanishing risk we obtain from the
celebrated Fundamental Theorem of Asset Pricing of
\citet{DelbSch:94,DelbSch:98} in great generality that this set can be
described in the form
\begin{equation}\label{eq:28}
\tilde{\cC}(x) = \cbr{\tilde{C} \in \cC \;:\; \E \int_{[0,\infty)} Z_t
  \,dC_t \leq x \text{ for all } Z \in \cZ },
\end{equation}
where $x$ denotes the available initial capital and $\cZ$ denotes a
nonempty set of local martingale deflators, i.e., of
$\P$-supermartingales $Z>0$ with $Z_0=1$ such that for any wealth
process $V$ of an admissible investment strategy the process $ZV$ is a
$\P$-supermartingale.

The agent's optimization problem is then to 
\begin{equation}\label{eq:29}
\text{Maximize } \tilde{\UU}(\tilde{C}) \set \E \int_0^\infty
\tilde{U}(Y^{\tilde{C}}_t) \,d\mu_t \text{ subject to } \tilde{C} \in \tilde{\cC}(x).
\end{equation}
To transform this into the type of utility maximization treated by
our main results in Section~\ref{sec:results} consider the bijection
\begin{equation}\label{eq:30}
  \cC \ni \tilde{C} \mapsto C\set \left(\int_0^t e^{\int_0^s \beta_u \,du}
  \,d\tilde{C}_s\right)_{t \geq 0} \in \cC
\end{equation}
and let
\begin{equation*}
U_t(\omega,c) \set \tilde{U}(e^{-\int_0^t \beta_u(\omega) \,du}c).
\end{equation*}
Then the utility functional $\UU$ of~\eqref{eq:4} satisfies
\begin{equation*}
\UU(C)=\tilde{\UU}(\tilde{C}).
\end{equation*}
Let us also put
\begin{equation*}
  \cC(1) \set \cbr{C \in \cC \;: \tilde{C} \text{ with~\eqref{eq:30}
    is contained in } \tilde{\cC}(1)}
\end{equation*}
and consider its polar
\begin{equation*}
  \cD(1) \set \cbr{D \in \cD \;:\; \E \abr{C,D} \leq 1 \text{ for all }
    C \in \cC(1)}.
\end{equation*}
This latter set is different from $\cbr{\mathbf{0}}$. Indeed, take any
local martingale deflator $Z \in \cZ$ and let $Z=M\tilde{D}$ be its
multiplicative Doob-Meyer decomposition into a local martingale $M$
and a predictable decreasing process $\tilde{D}$ with $\tilde{D}_0=1$.
Let $(T^n)_{n=1,2,\dots}$ be a localizing sequence of stopping times
such that each of the stopped supermartingales $Z^{T^n}$ (and, thus,
each of the stopped local martingales $M^{T^n}$), $n=1,2,\dots,$ is
of class (D). Observe then that $D^n_t \set (M_{T^n}\tilde{D}_t) e^{-\int_0^t \beta_u\,du}
1_{[0,T^n)}(t)$, $t \geq 0$, is contained in $\cD$ and
\begin{equation}\label{eq:31}
  \E\abr{C,D^n} = \E \int_{[0,T^n)}\opt{(M_{T^n}\tilde{D} 
  e^{-\int_0^.  \beta_u \,du})}_t\,dC_t = \E \int_{[0,T^n)} Z_t \,d\tilde{C}_t \leq 1
\end{equation}
for any $C \in \cC(1)$. Hence, $D^n \in \cD(1)$ for each $n=1,2,\dots$.
In fact, letting $n \uparrow \infty$ in~\eqref{eq:31} we
find in conjunction with~\eqref{eq:28}:
\begin{equation*}
  \cC(1) = \cbr{C \in \cC \;:\; \E \abr{C,D} \leq 1 \text{ for any } D
    \in \cD(1)}.
\end{equation*}
Hence, $\cC(1)$ and $\cD(1)$ exhibit the polar relations assumed in
the beginning of Section~\ref{sec:abstractDuality}.

It thus follows that we have the convex duality results of
Theorem~\ref{thm:2} for the Hindy-Huang-Kreps-utility maximization
problem~\eqref{eq:29}. This generalizes the treatment of the complete
market case in~\cite{BankRiedel01} to incomplete market models driven
by general semimartingales and thus also complements the dynamic
programming approach for exponential Levy models with constant
relative risk aversion of~\citet{BenthKarlsenReikvam01}. In
particular, the present paper develops convex duality for optimal
consumption with Hindy-Huang-Kreps preferences at a level of
generality similar to \citet{KramSch:99} for utility from terminal
wealth and to \citet{KaratZitk:03} for utility from the rate of
consumption.

\begin{Remark}\label{rem:1}
  It may be worthwhile to observe that our results also cover the
  finite time horizon case where $\mu$ has support $[0,T]$ for some
  possibly finite stopping time $T>0$. Indeed, in that case we can
  instead consider $\bar{\mu}(dt) \set \mu(dt) +
  1_{(T,\infty)}(t)e^{-t}\,dt$, $\bar{U}_t(c) \set
  1_{[0,T]}(t)U(c)+1_{(T,\infty)}(t) U^*(c)$, where $U^*:[0,\infty)
  \to \RR$ is any deterministic utility function satisfying the Inada
  conditions and having an upper bound $U^*(\infty)<\infty$. The
  budget set will be described by $$\bar{\cD}(1) \set \cbr{D 1_{[0,T)}
    \;:\; D \in \cD(1)}$$ and 
 \begin{align*}
   \bar{\cC}(1) &\set \cbr{C \in \cC \;:\; \E\abr{C,D} \leq 1 \text{
       for all } D \in \bar{\cD}(1)}\\&=\cbr{C \in \cC \;:\; (C_{t
       \wedge T})_{t \geq 0} \in \cC(1)}.  
 \end{align*}
 Then $\bar{U}$, $\bar{\mu}$ satisfy Assumption~\ref{asp:1} if $U$
 does and if $U^*$ has asymptotic elasticity less than one. Moreover,
 $\bar{\cC}(1)$, $\bar{\cD}(1)$ are polar to each other as requested
 in Section~\ref{sec:abstractDuality} and the consumption plans $C,
 \bar{C} \in \cC$ maximizing
\begin{equation*}
  \E \int_0^T U(C_t) \,d\mu_t, \quad \text{respectively } \E \int_0^\infty \bar{U}(\bar{C}_t)\,d\mu_t
\end{equation*}
 subject to $C \in \cC(x)$, respectively, $\bar{C} \in \bar{\cC}(x)$ are
 actually the same up to time $T$ (when all the optimal $\bar{C}$ jump
 to $+\infty$).
\end{Remark}

\section{Proofs of the main results}
\label{sec:proofs}

\subsection{Proof of Theorem~\ref{thm:1}}

Theorem~\ref{thm:1} follows readily from Lemmas~\ref{lem:2}
and~\ref{lem:3} below. These results rely heavily on the following
observation:

\begin{Lemma}\label{lem:1}
  Suppose Assumption~\ref{asp:1} holds true. For $D \in \cD$ let
  $\breve{D}$ denote its envelope with~\eqref{eq:12} and~\eqref{eq:13}
  and, recalling~\eqref{eq:14}, consider $\delta^D \in \bdot{\cD}(D)$
  of~\eqref{eq:11} with
  \begin{equation}
    \label{eq:32}
    C^D \set -V'(\delta^D) \in \cC.
  \end{equation}
  Then $\delta^D$ attains the infimum in the definition~\eqref{eq:10}
  of $\VV(D)$ and, if $\VV(D)<\infty$, $\delta^D$ is in fact the
  unique minimizer in $\bdot{\cD}(D)$, up to modifications on a
  $\P\otimes\mu$-null set.
\end{Lemma}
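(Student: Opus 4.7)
The plan is to exploit the pointwise Fenchel conjugacy between $U_t$ and $V_t$, combined with a Fubini--optional projection identity to compare the cost integrals $\E\int C^D_t \delta_t\,d\mu_t$ across different $\delta \in \bdot{\cD}(D)$. First, $\delta^D$ is feasible: by~\eqref{eq:14}, $\delta^D = U'(C^{\breve{D}})$ is predictable, and~\eqref{eq:12} gives $\int_t^\infty \delta^D\,d\mu = \breve{D}_t$, an adapted process, so $\opt{(\int_\cdot^\infty \delta^D\,d\mu)} = \opt{\breve{D}}$, which by~\eqref{eq:13} is dominated by $\opt{D}$. The conjugacy~\eqref{eq:9} yields, for every $\delta_t \geq 0$,
\begin{equation*}
V_t(\delta_t) + C^D_t \delta_t \;\geq\; U_t(C^D_t),
\end{equation*}
with equality exactly when $\delta_t = U'_t(C^D_t)$; since $C^D = -V'(\delta^D)$ is equivalent (by Fenchel duality) to $\delta^D = U'(C^D)$, equality holds at $\delta = \delta^D$. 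Integrating against $\mu$ and taking expectations,
\begin{equation*}
\E \int_0^\infty V_t(\delta_t)\,d\mu_t + \E \int_0^\infty C^D_t \delta_t\,d\mu_t \;\geq\; \E \int_0^\infty V_t(\delta^D_t)\,d\mu_t + \E \int_0^\infty C^D_t \delta^D_t\,d\mu_t.
\end{equation*}

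It therefore suffices to show $\E \int C^D_t \delta_t\,d\mu_t \leq \E \int C^D_t \delta^D_t\,d\mu_t$. Using $C^D_t = \int_{[0,t)} dC^D_s$ (left-continuity, $C^D_0=0$), Fubini for nonnegative integrands, and the atomlessness of $\mu$:
\begin{equation*}
\E \int_0^\infty C^D_t \delta_t\,d\mu_t \;=\; \E \int_{[0,\infty)} Y^\delta_s\,dC^D_s, \qquad Y^\delta_s := \int_s^\infty \delta_t\,d\mu_t,
\end{equation*}
where $Y^\delta$ has continuous paths. Since $C^D \in \cC$ is predictable, its càdlàg modification is adapted and increasing, so $dC^D$ is an optional random measure; the optional projection theorem then gives $\E \int Y^\delta\,dC^D = \E \int \opt{Y^\delta}\,dC^D$, and by~\eqref{eq:11} this is bounded by $\E \int \opt{D}\,dC^D$. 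For $\delta = \delta^D$, $Y^{\delta^D} = \breve{D}$ is already adapted, so $\opt{Y^{\delta^D}} = \breve{D}$, and~\eqref{eq:13} forces $\breve{D} = \opt{D}$ on the support of $dC^{\breve{D}} = dC^D$, giving $\E \int C^D_t \delta^D_t\,d\mu_t = \E \int \opt{D}\,dC^D$. The inequality follows. Rearranging the integrated conjugacy bound yields $\E \int V_t(\delta_t)\,d\mu_t \geq \E \int V_t(\delta^D_t)\,d\mu_t$ whenever the subtraction is legitimate; if the common bound $\E \int \opt{D}\,dC^D$ is infinite one truncates $C^D$ at levels $n$ and passes to the limit by monotone convergence, while the case $\VV(D) = \infty$ is trivial since $\delta^D \in \bdot{\cD}(D)$ then forces $\E \int V_t(\delta^D_t)\,d\mu_t = \infty$.

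The main obstacle is this Fubini/projection step: it relies crucially on both the atomlessness of $\mu$ (so that $Y^\delta$ is path-continuous and the Fubini exchange is clean) and the predictability of $C^D$ (so that $dC^D$ is an optional random measure and the constraint~\eqref{eq:11} on $\opt{Y^\delta}$ translates directly into a bound on the cost term). For uniqueness when $\VV(D) < \infty$: any other minimizer $\delta^*$ forces all the intermediate inequalities to collapse to equalities, so $V_t(\delta^*_t) + C^D_t \delta^*_t = U_t(C^D_t)$ holds $\P\otimes\mu$-almost everywhere; strict convexity of $V_t$ implies that $d \mapsto V_t(d) + C^D_t d$ admits a unique minimizer, namely $U'_t(C^D_t) = \delta^D_t$, so that $\delta^* = \delta^D$ $\P\otimes\mu$-a.e.
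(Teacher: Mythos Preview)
Your proof is essentially the paper's own argument: both establish feasibility of $\delta^D$ from the envelope properties, use the Fenchel inequality together with the Fubini/optional-projection identity to compare the cost terms, and invoke strict convexity of $V$ for uniqueness. The only substantive difference is organization of the truncation. The paper truncates from the outset by replacing $V$ with $V_n(d)\set\sup_{0\leq c\leq n}\{U(c)-cd\}$: convexity of $V_n$ gives $V_n(\delta)-V_n(\delta^D)\geq -(C^D\wedge n)(\delta-\delta^D)$ directly, and the key integrability $(C^D\wedge n)\delta^D\leq U(n)\in\mathbf{L}^1(\P\otimes\mu)$ follows from Assumption~\ref{asp:1}; one then lets $V_n\nearrow V$. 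Your ``truncate $C^D$ at level $n$'' amounts to the same thing, but be aware that after truncation the Fenchel \emph{equality} at $\delta^D$ fails on $\{C^D>n\}$, so the rearranged bound you actually get is
\[
\E\int_0^\infty V(\delta)\,d\mu \;\geq\; \E\int_0^\infty\bigl[U(C^D\wedge n)-(C^D\wedge n)\delta^D\bigr]\,d\mu \;=\; \E\int_0^\infty V_n(\delta^D)\,d\mu,
\]
and it is the monotone convergence $V_n(\delta^D)\nearrow V(\delta^D)$ that closes the argument. For uniqueness the paper argues in one line from strict convexity of $V$ and convexity of $\bdot{\cD}(D)$; this is cleaner than tracing equality back through the chain, which would again require finiteness of $\E\int C^D\delta^D\,d\mu$.
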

\begin{proof}
 It is immediate from~\eqref{eq:13} that indeed $\delta^D \in
 \bdot{\cD}(D)$. Uniqueness of minimizers for~\eqref{eq:10} is due to the
 strict convexity of $V$. It thus remains to prove optimality of
 $\delta^D$ for~\eqref{eq:10}. For this it suffices to show that, for $n=1,2,\dots$,
  \begin{equation}
    \label{eq:33}
    \E \int_0^\infty V_n(\delta) \,d\mu \geq \E \int_0^\infty
    V_n(\delta^D) \,d\mu \text{ for any } \delta \in \bdot{\cD}(D)
  \end{equation}
 where 
 \begin{equation}
   \label{eq:34}
   V_n(d) \set \sup_{0 \leq c \leq n} \cbr{U(c)-cd} = 
  \begin{cases} 
   U(n)-n d, & 0 \leq d \leq U'(n),\\
   V(d), & d \geq U'(n).
 \end{cases}
  \end{equation}
  Indeed, it is readily checked that $V_n \geq 0$ is continuously
  differentiable, decreasing and convex on $(0,\infty)$ with $V_n
  \nearrow V$ as $n\uparrow\infty$. Hence, due to monotone
  integration, optimality of $\delta^D$ in~\eqref{eq:10} will follow by
  letting $n \uparrow \infty$ in~\eqref{eq:33}.

 To prove this inequality, we first observe that, by definition and
 convexity of $V_n$,
 \begin{displaymath}
   U(n) = V_n(0) \geq V_n(\delta^D) -V'_n(\delta^D)\delta^D.
 \end{displaymath}
 By Assumption~\ref{asp:1}, $U(n)$ is $\P \otimes
 \mu$-integrable. Since $V_n$, $\delta^D$ and $-V'_n$ are nonnegative,
 it thus follows that also
 \begin{equation}
   \label{eq:35}
   -V'_n(\delta^D)\delta^D= 
   (C^D\wedge  n)\delta^D \in \mathbf{L}^1(\P\otimes \mu),
 \end{equation}
 where the identity is due to the definition~\eqref{eq:32} of $C^D$.

 Again by convexity of $V_n$, we have
 \begin{equation}
   \label{eq:36} V_n(\delta) - V_n(\delta^D) \geq
   V_n'(\delta^D)(\delta-\delta^D) = (C^D \wedge n)\delta^D-(C^D \wedge
   n)\delta.
 \end{equation} 
 So to obtain~\eqref{eq:33}, we have to show that the integral of the
 right side of~\eqref{eq:36} with respect to $\P \otimes \mu$ is
 nonnegative.  To this end, note that
 \begin{displaymath}
   \E \int_0^\infty (C^D \wedge n) \delta \,d\mu = \E
   \abr{C^D \wedge n,\opt{\int_.^\infty \delta \,d\mu}} \leq \E\abr{C^D
     \wedge n,D},
 \end{displaymath} 
 where the last estimate is immediate from $\delta \in
 \bdot{\cD}(D)$. When repeating this calculation for $\delta^D$ instead
 of $\delta$ this estimate turns into an identity because
 of~\eqref{eq:13} and $\cbr{d(C^D\wedge n)>0} \subset \cbr{dC^D>0}$.

 In conjunction with~\eqref{eq:35}, it follows that indeed
 \begin{displaymath} 
   \E \int_0^\infty (C^D \wedge n) \delta \,d\mu
   \leq \E \int_0^\infty (C^D \wedge n) \delta^D \,d\mu<\infty.
 \end{displaymath} 
 This accomplishes our proof.
\end{proof}

\begin{Lemma}\label{lem:2}
  Suppose Assumption~\ref{asp:1} holds true. Then the conjugacy
  relation~\eqref{eq:15} holds. Moreover, if $\UU(\hat{C})<\infty$,
  the infimum in~\eqref{eq:15} is attained for $D \in\cD$ if and only
  if its envelope process with~\eqref{eq:12} and~\eqref{eq:13} is
  actually $\breve{D}=\int_.^\infty U'(\hat{C}) \,d\mu$.
\end{Lemma}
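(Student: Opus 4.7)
The plan rests on two tools: pointwise Fenchel--Young between $U$ and $V$, and the envelope characterization~\eqref{eq:13}, which converts the one-sided bound $\opt{\breve{D}}\le\opt{D}$ into equality on the support of $dC^{\breve D}$. I use Lemma~\ref{lem:1} throughout: it identifies the minimizer $\delta^D\in\bdot{\cD}(D)$ in~\eqref{eq:10} with the density $-\bdot{\breve{D}}$ of the envelope, so $\VV(D)=\E\int V(\delta^D)\,d\mu$ and $\int_t^\infty \delta^D\,d\mu = \breve{D}_t$.

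For weak duality (``$\le$'' in~\eqref{eq:15}), I fix $D\in\cD$ with $\VV(D)<\infty$. Pointwise Fenchel--Young gives $U_t(\hat C_t)\le V_t(\delta^D_t)+\hat C_t\delta^D_t$; integrating against $d\mu$ and taking expectations yields $\UU(\hat C)\le\VV(D)+\E\int_0^\infty \hat C_t\delta^D_t\,d\mu_t$. I then rewrite the cross term by Fubini, using $\hat C_t=\int_{[0,t)}d\hat C_s$ and that $\mu$ has no atoms, and apply the optional projection formula (valid because $\hat C\in\cC$ is predictable hence optional) together with~\eqref{eq:13}:
\begin{equation*}
\E\int_0^\infty \hat C_t\delta^D_t\,d\mu_t \;=\; \E\int_{[0,\infty)}\breve{D}_s\,d\hat C_s \;=\; \E\int \opt{\breve{D}}_s\,d\hat C_s \;\le\; \E\int \opt{D}_s\,d\hat C_s \;=\; \E\abr{\hat C,D}.
\end{equation*}

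To exhibit an attainer when $\UU(\hat C)<\infty$, I set $\hat D_t:=\int_t^\infty U'(\hat C)\,d\mu$, which lies in $\cD$ (right-continuity uses the atomlessness of $\mu$; the other properties are immediate). By construction $\hat D$ satisfies~\eqref{eq:12}--\eqref{eq:13} trivially with $C^{\breve{\hat D}}=\hat C$, so uniqueness in Lemma~\ref{lem:10} forces $\breve{\hat D}=\hat D$ and $\delta^{\hat D}=U'(\hat C)$. All inequalities in the weak-duality chain now collapse---Fenchel--Young is pointwise attained because $\delta^{\hat D}=U'(\hat C)$, and $\opt{\breve{\hat D}}=\opt{\hat D}$ identically---hence $\UU(\hat C)=\VV(\hat D)+\E\abr{\hat C,\hat D}$, which completes~\eqref{eq:15} and simultaneously produces an attainer.

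For the characterization, I trace the equality cases. Any attainer $D$ forces every step of the weak-duality argument to be an equality; pointwise Fenchel--Young equality $\P\otimes\mu$-a.e.\ gives $\delta^D=U'(\hat C)$ and hence $\breve{D}=\int_\cdot^\infty U'(\hat C)\,d\mu$. Conversely, if $D\in\cD$ has this envelope, then $C^{\breve{D}}=\hat C$, $\delta^D=U'(\hat C)$ triggers pointwise Fenchel--Young equality, and~\eqref{eq:13} makes the envelope step tight on $\{dC^{\breve D}>0\}=\{d\hat C>0\}$, so running the chain backwards gives equality in~\eqref{eq:15}. The main obstacle is the Fubini/projection manipulation linking $\E\int\hat C\,\delta\,d\mu$ to $\E\abr{\hat C,D}$ through $\E\int\breve{D}\,d\hat C$: the atomlessness of $\mu$ is essential for Fubini, and~\eqref{eq:13} supplies precisely the two-sided envelope control needed to pin down when these inequalities become equalities.
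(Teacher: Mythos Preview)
Your proof is correct and follows essentially the same route as the paper's: weak duality via pointwise Fenchel--Young plus the envelope inequality $\opt{\breve D}\le\opt D$, the explicit attainer $\hat D=\int_\cdot^\infty U'(\hat C)\,d\mu$, and the equality-tracing argument for the characterization. The only point you leave implicit is the finiteness check $\VV(\hat D)<\infty$ (needed so that $\hat D$ is admissible in the infimum~\eqref{eq:15}); the paper makes this explicit via $0\le \hat C U'(\hat C)\le U(\hat C)\in\mathbf{L}^1(\P\otimes\mu)$, which immediately gives $\E\int V(U'(\hat C))\,d\mu=\UU(\hat C)-\E\int \hat C U'(\hat C)\,d\mu<\infty$.
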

\begin{proof}
 To prove ``$\leq$'' in~\eqref{eq:15}, take $D \in \cD$ with
 $\VV(D)<\infty$ and $\E \abr{\hat{C},D}<\infty$. By Lemma~\ref{lem:1}
 there is $\delta^D \in \bdot{\cD}(D)$ such that $\VV(D)=\E \int_0^\infty
 V(\delta^D) \,d\mu$. Then
 \begin{displaymath}
   \E \int_0^\infty \hat{C} \delta^D \, d\mu = \E \abr{\hat{C},\opt{\int_.^\infty
       \delta^D \,d\mu}} \leq \E \abr{\hat{C},D}<\infty.
 \end{displaymath}
 Thus we can integrate the inequality
 \begin{displaymath}
   0 \leq U(\hat{C}) \leq V(\delta^D)+\hat{C} \delta^D
 \end{displaymath}
 with respect to $\P \otimes \mu$ to deduce that indeed
 \begin{align*}
   0 \leq \UU(\hat{C})  = \E \int_0^\infty U(\hat{C}) \,d\mu 
     &\leq \E
   \int_0^\infty V(\delta^D)\,d\mu+\E\int_0^\infty \hat{C} \delta^D
   \,d\mu \\&\leq \VV(D)+\E\abr{\hat{C},D}
 \end{align*}

 For ``$\geq$'' in~\eqref{eq:15} we can assume $\UU(\hat{C})=\E
 \int_0^\infty U(\hat{C})\,d\mu<\infty$ without loss of generality. Let
 $\hat{\delta} \set U'(\hat{C})$ and note that because $U$ is concave
 in $c$ with $U(0)=0$ we have
 \begin{equation}\label{eq:37}
  0 \leq \hat{C}\hat{\delta} = \hat{C} U'(\hat{C}) \leq U(\hat{C}) \in
  \mathbf{L}^1(\P \otimes \mu).
\end{equation}
 Moreover, $\hat{D} \set \int_.^\infty \hat{\delta} \,d\mu
 \in \cD$ satisfies
 \begin{align*}
   \VV(\hat{D}) \leq \E \int_0^\infty V(\hat{\delta}) \,d\mu = \E
   \int_0^\infty U(\hat{C}) \,d\mu+\E \int_0^\infty
   \hat{C}\hat{\delta} \,d\mu<\infty.
 \end{align*}
 From Lemma~\ref{lem:1} it now follows that in fact $\VV(\hat{D}) = \E
 \int_0^\infty V(\hat{\delta}) \,d\mu<\infty$. We thus can integrate
 the identity
 \begin{align*}
   U(\hat{C}) = V(\hat{\delta})-\hat{C}\hat{\delta}
 \end{align*}
 with respect to $\P \otimes \mu$ to obtain 
 \begin{displaymath}
   \UU(\hat{C}) = \VV(\hat{D})-\E \int_0^\infty \hat{C} \hat{\delta}
   \,d\mu
   =\VV(\hat{D})-\E\abr{\hat{C},\hat{D}}.
 \end{displaymath}
 This gives ``$\geq$'' in~\eqref{eq:15}.

 The preceding argument already establishes the ``if''-part of the
 present lemma. For the ``only if''-part assume that $D \in \cD$
 satisfies $\UU(\hat{C})=\VV(D)+\E \abr{\hat{C},D}<\infty$. Clearly,
 we have $\VV(D)<\infty$ then. Thus, by Lemma~\ref{lem:1}, there is
 $\delta^D \in \bdot{\cD}(D)$ with $\VV(D)=\E\int_0^\infty
 V(\delta^D)\,d\mu<\infty$. Moreover, the choice of $D$ entails $\E
 \int_0^\infty \hat{C} \delta^D \,d\mu \leq \E\abr{\hat{C},D}<\infty$.
 Now, integrating
 \begin{equation}
   \label{eq:38}
   U(\hat{C}) \leq V(\delta^D)+\hat{C}\delta^D
 \end{equation}
 we find 
 \begin{equation}\label{eq:39}
   \UU(\hat{C}) \leq \VV(D)+\E\abr{\hat{C},\opt{\int_.^\infty \delta^D
       \,d\mu}} \leq \VV(D)+\E \abr{\hat{C},D}=\UU(\hat{C})<\infty.
 \end{equation}
 So, equality must hold true in all the above estimates. It follows
 that equality holds $\P \otimes \mu$-almost everywhere
 in~\eqref{eq:38} which readily implies $\delta^D=U'(\hat{C})$
 $\P\otimes\mu$-almost everywhere, and, thus, $\opt{\int_.^\infty
   U'(\hat{C}) \,d\mu}=\opt{\int_.^\infty \delta^D \,d\mu} \leq
 \opt{D}$. Moreover, \eqref{eq:39} then also yields $\E
 \abr{\hat{C},\opt{\int_.^\infty U'(\hat{C})
     \,d\mu}}=\E\abr{\hat{C},\opt{D}}$, i.e., in fact
 $\opt{\int_.^\infty U'(\hat{C}) \,d\mu}=\opt{D}$ on
 $\cbr{d\hat{C}>0}$. By Lemma~\ref{lem:1}, this identifies
 $\int_.^\infty U'(\hat{C})\,d\mu$ as the envelope process $\breve{D}$
 of $D$ with~\eqref{eq:12} and~\eqref{eq:13}.  This accomplishes our
 proof. 
\end{proof}

\begin{Lemma}\label{lem:3}
  Suppose Assumption~\ref{asp:1} holds. Then the conjugacy relation~\eqref{eq:16} holds. Moreover, if
 $\VV(\hat{D})<\infty$, the supremum in~\eqref{eq:16} is attained
 exactly for $\hat{C} \set C^{\hat{D}}$ where $C^{\hat{D}}$ is defined
 in Lemma~\ref{lem:1}.
\end{Lemma}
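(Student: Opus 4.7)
My plan is to mirror the structure of Lemma~\ref{lem:2}, with the roles of the primal and dual variables exchanged. The Fenchel inequality $U(c)\le V(d)+cd$ and its equality case (attained exactly at $c=-V'(d)$) drive everything, with the pairing structure from~\eqref{eq:11} and~\eqref{eq:13} converting Lebesgue integrals against $\delta^{\hat{D}}$ into the Stieltjes pairing $\E\abr{\cdot,\hat{D}}$.

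For ``$\ge$'' in~\eqref{eq:16}, I fix $C\in\cC$ with $\UU(C)<\infty$; the case $\VV(\hat{D})=\infty$ is trivial, so I assume $\VV(\hat{D})<\infty$ and invoke Lemma~\ref{lem:1} to obtain the minimizer $\delta^{\hat{D}}\in\bdot{\cD}(\hat{D})$ with $\VV(\hat{D})=\E\int V(\delta^{\hat{D}})\,d\mu$. Integrating the Fenchel bound $U(C)\le V(\delta^{\hat{D}})+C\delta^{\hat{D}}$ against $\P\otimes\mu$, combined with Fubini and the membership relation $\opt{\int_\cdot^\infty\delta^{\hat{D}}\,d\mu}\le\opt{\hat{D}}$ to estimate $\E\int C\delta^{\hat{D}}\,d\mu\le\E\abr{C,\hat{D}}$, yields $\UU(C)-\E\abr{C,\hat{D}}\le\VV(\hat{D})$.

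For the reverse inequality and the attainment in the case $\VV(\hat{D})<\infty$, I take $\hat{C}\set C^{\hat{D}}=-V'(\delta^{\hat{D}})\in\cC$, on which Fenchel holds as an equality: $U(\hat{C})=V(\delta^{\hat{D}})+\hat{C}\delta^{\hat{D}}$ pointwise. Before integrating I need $\UU(\hat{C})<\infty$; asymptotic elasticity~\eqref{eq:3} combined with the Fenchel equality gives $\hat{C}\delta^{\hat{D}}\le\tfrac{\gamma}{1-\gamma}V(\delta^{\hat{D}})$ on $\{\hat{C}>C^\gamma\}$, while on $\{\hat{C}\le C^\gamma\}$ a second Fenchel bound yields $\hat{C}\delta^{\hat{D}}\le U(C^\gamma)+V(\delta^{\hat{D}})$; both $U(C^\gamma)$ and $V(\delta^{\hat{D}})$ are $\P\otimes\mu$-integrable, hence so is $\hat{C}\delta^{\hat{D}}$, and thus $\UU(\hat{C})<\infty$. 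Integration then gives $\UU(\hat{C})=\VV(\hat{D})+\E\int\hat{C}\delta^{\hat{D}}\,d\mu$. To rewrite $\E\int\hat{C}\delta^{\hat{D}}\,d\mu$ as $\E\abr{\hat{C},\hat{D}}$, I apply Fubini (using that $\mu$ is atomless) to turn it into $\E\int\breve{\hat{D}}_s\,d\hat{C}_s$ with $\breve{\hat{D}}_s=\int_s^\infty\delta^{\hat{D}}\,d\mu$; predictability of $\hat{C}$ allows me to replace $\breve{\hat{D}}$ by $\opt{\breve{\hat{D}}}$, and~\eqref{eq:13} identifies $\opt{\breve{\hat{D}}}$ with $\opt{\hat{D}}$ on $\{d\hat{C}>0\}$, which is the only set the integrator charges. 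The residual case $\VV(\hat{D})=\infty$ is handled by running the same computation with $V_n$ from~\eqref{eq:34} in place of $V$ and $C_n=\hat{C}\wedge n$ in place of $\hat{C}$, and letting $n\uparrow\infty$ by monotone convergence.

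Uniqueness follows from the equality case of Fenchel: any other maximizer $\tilde{C}$ forces equality $\P\otimes\mu$-a.e.\ in $U(\tilde{C})\le V(\delta^{\hat{D}})+\tilde{C}\delta^{\hat{D}}$, which pins down $\tilde{C}=-V'(\delta^{\hat{D}})=\hat{C}$ on the support of $\mu$; full support of $\mu$ combined with left-continuity of both paths then upgrades this to indistinguishability. The main obstacle I expect is the Lebesgue-to-Stieltjes conversion $\E\int\hat{C}\delta^{\hat{D}}\,d\mu=\E\abr{\hat{C},\hat{D}}$, which is precisely where~\eqref{eq:13} on the support of $d\hat{C}$ becomes indispensable and where the singular-control setting departs from the Kramkov--Schachermayer terminal-wealth framework.
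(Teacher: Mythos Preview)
Your proof is correct and follows essentially the same route as the paper's: the ``$\geq$'' direction via the Fenchel inequality and the membership relation $\delta^{\hat D}\in\bdot{\cD}(\hat D)$, the ``$\leq$'' direction via the Fenchel \emph{equality} at $\hat C=-V'(\delta^{\hat D})$ combined with~\eqref{eq:13} to identify $\E\int\hat C\,\delta^{\hat D}\,d\mu$ with $\E\abr{\hat C,\hat D}$, the truncation $V_n,\,\hat C\wedge n$ for the case $\VV(\hat D)=\infty$, and asymptotic elasticity for the integrability of $U(\hat C)$. The only cosmetic differences are that the paper bounds $U(\hat C)$ first (via $(1-\gamma)U(\hat C)\le V(\hat\delta)$ on $\{\hat C>C^\gamma\}$) and then deduces $\hat C\hat\delta\le U(\hat C)\in\mathbf L^1$ from concavity and $U(0)=0$, and that it invokes strict concavity of $\UU$ for uniqueness rather than spelling out the Fenchel equality case.
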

\begin{proof}
  Let us first apply Lemma~\ref{lem:1} to obtain that
  there is $\hat{\delta} \set \delta^{\hat{D}} \in \bdot{\cD}(\hat{D})$
  with $\VV(\hat{D})=\E\int_0^\infty V(\hat{\delta})\,d\mu$. 

  To see that ``$\geq$'' holds in~\eqref{eq:16}, take $C \in \cC$ with
  $\UU(C)=\E \int_0^\infty U(C) \,d\mu<\infty$. Without loss of
  generality we can assume $\VV(\hat{D})<\infty$ and
  $\E\abr{C,\hat{D}}<\infty$. Then all terms in the inequality
  \begin{displaymath}
    V(\hat{\delta}) \geq U(C)-C\hat{\delta}
  \end{displaymath}
  are $\P\otimes\mu$-integrable. Upon integration we get
  $\VV(\hat{D}) \geq \UU(C)-\E\abr{C,\int_.^\infty
    \hat{\delta}\,d\mu}$. This implies the desired estimate since
  $\opt{\left(\int_.^\infty \hat{\delta}\,d\mu\right)}\leq\opt{\hat{D}}$.

  For the proof of ``$\leq$'' in~\eqref{eq:16} consider $\hat{C} \set
  -V'(\hat{\delta}) \in \cC$ where $\hat{\delta}$ is chosen as
  above. If $\VV(\hat{D})=\infty$, we consider $C^n \set \hat{C}\wedge n \in
  \cbr{\UU<\infty}$, $n=1,2,\dots$, in~\eqref{eq:16} to deduce
  \begin{displaymath}
    \UU(C^n)-\E\abr{C^n,\hat{D}} = \E\int_0^\infty (U(\hat{C}
      \wedge n)-(\hat{C} \wedge n) U'(\hat{C}\wedge n)\,d\mu = \E
    \int_0^\infty V_n(\hat{\delta})\,d\mu
  \end{displaymath}
  where $V_n$ is as in~\eqref{eq:34}. Since $V_n \nearrow V$, it
  follows by monotone integration that as $n\uparrow\infty$ the above
  expression converges to $\E\int_0^\infty V(\hat{\delta})\,d\mu \geq
  \VV(\hat{D})$ and we obtain ``$\leq$'' in~\eqref{eq:16} in case
  $\VV(\hat{D})=\infty$.

  For the remaining case where $\E\int_0^\infty V(\hat{\delta})\,d\mu<\infty$,
  let us first show that $\UU(\hat{C})=\E\int_0^\infty
  U(\hat{C})\,d\mu<\infty$. Indeed, by Assumption~\ref{asp:1} the
  asymptotic elasticity of $U$ is uniformly less than one in the sense
  that $c U'(c) < \gamma U(c)$ for $c > C^\gamma$ where $\gamma \in
  [0,1)$. Thus, we have
 \begin{displaymath}
   \mathbf{L}^1(\P\otimes\mu) \ni V(\hat{\delta})
  =U(\hat{C})-\hat{C}U'(\hat{C}) 
  \geq (1-\gamma) U(\hat{C}) \geq 0 
  \text{ on} \cbr{\hat{C}>C^\gamma}.
 \end{displaymath}
 Since by assumption $\E \int_0^\infty U(C^\gamma)\,d\mu<\infty$, it
 thus follows that $U(\hat{C}) \in \mathbf{L}^1(\P\otimes \mu)$, i.e.,
 $\UU(\hat{C})<\infty$.

 Now, recalling the estimate~\eqref{eq:37}, we deduce from
 $\UU(\hat{C})<\infty$ that also $\E\abr{\hat{C},\hat{D}}=\E\int_0^\infty
 \hat{C}\hat{\delta}\,d\mu<\infty$. The ``$\leq$''-claim now follows
 upon integration of $V(\hat{\delta})=U(\hat{C})-\hat{C}\hat{\delta}$
 with respect to $\P\otimes\mu$. This also establishes the ``if''-part
 of our lemma. The ``only if''-part follows immediately from this and
 the strict concavity of $\UU$ on $\cbr{\UU<\infty}$ which implies the
 uniqueness of the optimizer~$\hat{C}$.
\end{proof}

\subsection{Proof of Theorem~\ref{thm:2}}

The proof of Theorem~\ref{thm:2} is prepared by the following
Lemmas~\ref{lem:4}--\ref{lem:9}.

\begin{Lemma}\label{lem:4}
  Under the assumptions of Theorem~\ref{thm:2}, we have
  \begin{equation}
   \label{eq:40}
   \mathbf{v}(y) = \inf_{\delta \in \bdot{\mathcal{D}}(y)} \E \int_0^\infty
     V(\delta) \,d\mu, \quad y>0,
  \end{equation}
  where 
  \begin{equation}
    \label{eq:41}
    \bdot{\cD}(y) \set \bigcup_{D \in \cD(y)} \bdot{\cD}(D).
  \end{equation}
  Moreover, for any $y>0$ with $\mathbf{v}(y)<\infty$, the infimum
  in~\eqref{eq:40} is attained at a unique $\delta^y \in
  \bdot{\mathcal{D}}(y)$ for which, in addition, $\hat{C}^y \set
  -V'(\delta^y)$ is contained in $\cC$. Finally, $\mathbf{v}$ is
  strictly convex on $\cbr{\mathbf{v}<\infty}$.
\end{Lemma}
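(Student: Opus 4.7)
The identity \eqref{eq:40} follows by swapping the two infima in $\mathbf{v}(y) = \inf_{D\in\cD(y)}\cV(D)$ with the characterization $\cV(D)=\inf_{\delta\in\bdot{\cD}(D)}\E\int V(\delta)\,d\mu$ given by Lemma~\ref{lem:1}, using the definition \eqref{eq:41} of $\bdot{\cD}(y)$ to identify the nested infima with a single infimum over $\bdot{\cD}(y)$.

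The core of the argument is existence of a minimizer when $\mathbf{v}(y)<\infty$, and this is where I expect the main obstacle. Starting from a minimizing sequence $\delta^n\in\bdot{\cD}(y)$ together with witnesses $D^n\in\cD(y)$ satisfying $\delta^n\in\bdot{\cD}(D^n)$, the plan is to invoke the convex-compactness framework of Appendix~\ref{sec:minimax} combined with a Komlós-type extraction (available thanks to $V\ge0$ and the bounded minimizing values) to replace $\delta^n,D^n$ by forward convex combinations $\bar\delta^n\in\conv(\delta^n,\delta^{n+1},\dots)$ and $\bar D^n\in\conv(D^n,D^{n+1},\dots)$ converging $\P\otimes\mu$-a.e., respectively in the metric \eqref{eq:2}, to limits $\delta^y,D^y$ in the ambient spaces of nonnegative optional/measurable processes. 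The closedness of $\cD(y)$ in \eqref{eq:2} then delivers $D^y\in\cD(y)$; the defining inequality $\opt{\int_\cdot^\infty\bar\delta^n\,d\mu}\le \opt{\bar D^n}$ must pass to the limit, which is the most delicate step and will be handled by Fatou on the left (nonnegative integrand, monotone in the upper limit) together with dominated/pointwise convergence for the right-continuous decreasing $\bar D^n$. This yields $\delta^y\in\bdot{\cD}(D^y)\subset\bdot{\cD}(y)$, and a further application of Fatou to the convex nonnegative integrands $V(\bar\delta^n)$ gives $\E\int V(\delta^y)\,d\mu\le\liminf_n\E\int V(\bar\delta^n)\,d\mu\le\mathbf{v}(y)$, confirming that $\delta^y$ is a minimizer.

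Uniqueness of $\delta^y$ is immediate from strict convexity of $V$ and convexity of $\bdot{\cD}(y)$: the midpoint of two distinct minimizers would be feasible yet strictly better. For $\hat C^y=-V'(\delta^y)\in\cC$, the plan is to apply Lemma~\ref{lem:1} to the dual variable $D^y$ produced above: Lemma~\ref{lem:1} yields a unique $\delta^{D^y}\in\bdot{\cD}(D^y)$ minimizing $\cV(D^y)$ with $C^{D^y}=-V'(\delta^{D^y})\in\cC$; since $\delta^y$ itself minimizes $\E\int V(\cdot)\,d\mu$ on $\bdot{\cD}(D^y)\subseteq\bdot{\cD}(y)$ and attains $\mathbf{v}(y)=\cV(D^y)$, the uniqueness of $\delta^{D^y}$ forces $\delta^y=\delta^{D^y}$ and hence $\hat C^y=C^{D^y}\in\cC$.

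For strict convexity of $\mathbf{v}$, take $y_1\ne y_2$ in $\{\mathbf{v}<\infty\}$ and $\lambda\in(0,1)$. The convex combination $\delta:=\lambda\delta^{y_1}+(1-\lambda)\delta^{y_2}$ belongs to $\bdot{\cD}(\lambda y_1+(1-\lambda)y_2)$ because $\cD(y)=y\cD(1)$ is convex and the defining constraint in \eqref{eq:11} is linear, so strict convexity of $V$ gives
\begin{equation*}
  \mathbf{v}(\lambda y_1+(1-\lambda)y_2)\le\E\int V(\delta)\,d\mu\le\lambda\mathbf{v}(y_1)+(1-\lambda)\mathbf{v}(y_2),
\end{equation*}
with strict inequality on the right provided $\delta^{y_1}\ne\delta^{y_2}$ on a $\P\otimes\mu$-positive set. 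To exclude the equality $\delta^{y_1}=\delta^{y_2}$ for $y_1<y_2$, I would use assumption 4 of Section~\ref{sec:abstractDuality} to produce a non-trivial $\delta^*\in\bdot{\cD}(y_2-y_1)$; then $\delta^{y_1}+\delta^*\in\bdot{\cD}(y_2)$ and strict monotonicity of $V$ yields $\E\int V(\delta^{y_1}+\delta^*)\,d\mu<\E\int V(\delta^{y_1})\,d\mu=\mathbf{v}(y_2)$, contradicting the optimality of $\delta^{y_2}=\delta^{y_1}$ on $\bdot{\cD}(y_2)$.
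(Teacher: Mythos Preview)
Your argument is essentially correct and tracks the paper's structure, but the existence step is over-engineered and contains a step you yourself flag as ``delicate'' without fully justifying it. You carry along both the minimizing $\delta^n$ and witnesses $D^n\in\cD(y)$, extract convergent convex combinations of \emph{both} sequences, and then try to pass the constraint $\opt{(\int_\cdot^\infty \bar\delta^n\,d\mu)}\le \opt{\bar D^n}$ to the limit. Convergence of $\bar D^n$ in the metric~\eqref{eq:2} does not directly control optional projections pointwise, and you also need the \emph{same} convex weights on both sequences to preserve the constraint at each stage---neither point is addressed.

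The paper avoids all of this with a cleaner observation: after extracting convex combinations $\tilde\delta^n\to\delta^y$ $\P\otimes\mu$-a.e.\ (Delbaen--Schachermayer Lemma~A1.1), it simply \emph{defines} $D^y\set\int_\cdot^\infty \delta^y\,d\mu$ and verifies $D^y\in\cD(y)$ via the polar characterization: for any $C\in\cC(x)$, Fatou gives
\[
  \E\abr{C,D^y}=\E\int_0^\infty C\delta^y\,d\mu
  \le \liminf_n \E\int_0^\infty C\tilde\delta^n\,d\mu
  = \liminf_n \E\abr{C,\textstyle\int_\cdot^\infty \tilde\delta^n\,d\mu}
  \le xy,
\]
using only that $\tilde\delta^n\in\bdot\cD(y)$ by convexity. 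Then $\delta^y\in\bdot\cD(D^y)\subset\bdot\cD(y)$ trivially. The witnesses $D^n$ are never needed, and the ``delicate'' limit passage disappears. Your treatments of uniqueness, of $-V'(\delta^y)\in\cC$ via Lemma~\ref{lem:1}, and of strict convexity of $\mathbf v$ are fine---indeed more explicit than the paper's terse appeal to ``strict convexity and strict monotonicity of $V$''.
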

\begin{proof}
 Identity~\eqref{eq:40} is immediate from~\eqref{eq:41} and Lemma~\ref{lem:1}.

 Now assume $\mathbf{v}(y)<\infty$ and consider a minimizing sequence
 $\delta^n \in \bdot{\cD}(y)$ for~\eqref{eq:40}. By Lemma~A1.1 of
 \citet{DelbSch:94} there is a sequence $\tilde{\delta}^n$
 of convex combinations of $\delta^n, \delta^{n+1},\dots$ which
 converges $\P \otimes \mu$-almost everywhere to an optional
 $\delta^y$ taking values in $[0,\infty]$. In fact, $\delta^y \in
 \bdot{\cD}(y)$ because $D^y \set \int_.^\infty \delta^y \,d\mu \in
 \cD(y)$, which holds since by Fatou's lemma
 \begin{align*}
   \E \abr{C,D^y} = \E\int_0^\infty C \delta^y \,d\mu &\leq \liminf_{n}
   \E\int_0^\infty C\tilde{\delta}^n \,d\mu\\&=\liminf_n \E\abr{C,\int_.^\infty
     \tilde{\delta}^n\,d\mu} \leq xy
 \end{align*}
 for any $C \in \cC(x)$, $x>0$. Here the last inequality follows because
 $\tilde{\delta}^n \in \bdot{\cD}(y)$ by convexity of this set. Another
 application of Fatou's lemma reveals
 \begin{displaymath}
   \E \int_0^\infty V(\delta^y)\,d\mu \leq \liminf_n \E\int_0^\infty
   V(\tilde{\delta}^n)\,d\mu \leq \liminf_n \E\int_0^\infty V(\delta^n)\,d\mu=\mathbf{v}(y)
 \end{displaymath}
 by convexity of $V$ and our choice of $(\delta^n)_{n=1,2,\dots}$ as a
 minimizing sequence. This proves existence of a minimizer
 for~\eqref{eq:40}. Uniqueness up to a $\P \otimes \mu$-null set
 follows from the strict convexity of $V$. In fact, applying
 Lemma~\ref{lem:1} for $D \set D^y$ reveals that $\delta^y$ has a
 predictable $\P\otimes \mu$-modification which is unique up to
 indistinguishability if we require, in addition, that $-V'(\delta^y)
 \in \cC$. Strict convexity of $\mathbf{v}$ on
 $\cbr{\mathbf{v}<\infty}$ now follows from strict convexity and
 strict monotonicity of $V$.
\end{proof}

\begin{Lemma}\label{lem:5}
  Under the assumptions of Theorem~\ref{thm:2}, the primal value function
  $\mathbf{u}$ of~\eqref{eq:19} is real-valued and conjugate to the
  dual value function $\mathbf{v}$ of~\eqref{eq:20} in the sense
  that~\eqref{eq:21} and~\eqref{eq:22} hold true.
\end{Lemma}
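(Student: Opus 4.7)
First, I would establish weak duality and real-valuedness before attacking the full conjugacy. For any $C \in \cC(x)$ and $D \in \cD(y)$, Theorem~\ref{thm:1}, specifically~\eqref{eq:16}, gives $\UU(C) - \E\abr{C,D} \leq \VV(D)$ whenever $\UU(C) < \infty$, and polarity forces $\E\abr{C,D} \leq xy$, so $\UU(C) \leq \VV(D) + xy$. Taking the supremum over $C \in \cC(x)$ and then the infimum over $D \in \cD(y)$ yields $\mathbf{u}(x) \leq \mathbf{v}(y) + xy$ for all $x,y>0$, which is the easy direction of both~\eqref{eq:21} and~\eqref{eq:22}. Positivity $\mathbf{u}(x)>0$ follows because $x\mathbf{1} \in \cC(x)$ and $\UU(x\mathbf{1}) = \E\int U_t(x)\,d\mu_t > 0$ by Assumption~\ref{asp:1}. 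Finiteness on all of $(0,\infty)$ follows from the hypothesis $\mathbf{u}(x_0)<\infty$ together with the doubling argument of~\cite{KramSch:99}: the asymptotic elasticity~\eqref{eq:5} gives $\UU(2C) \leq 2^\gamma \UU(C) + \UU(C^\gamma)$, hence $\mathbf{u}(2x) \leq 2^\gamma \mathbf{u}(x) + \UU(C^\gamma)$, and iterating shows $\mathbf{u}(x) < \infty$ for every $x>0$.

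The core of the argument is the reverse inequality $\mathbf{u}(x) \geq \inf_{y>0}\{\mathbf{v}(y)+xy\}$, obtained by a minimax interchange. Expanding $\UU(C)$ inside the primal by means of~\eqref{eq:15},
\[
\mathbf{u}(x) = \sup_{C \in \cC(x)}\,\inf_{D \in \cD,\,\VV(D)<\infty}\bigl\{\VV(D) + \E\abr{C,D}\bigr\}.
\]
The integrand is affine (hence concave) in $C$, convex in $D$, and lower semicontinuous in $C$ by Lemma~\ref{lem:11}; the set $\cC(x)$ is convex and convexly compact in Zitković's sense, see Appendix~\ref{sec:minimax}. The minimax theorem proved there then justifies the interchange
\[
\mathbf{u}(x) = \inf_{D,\,\VV(D)<\infty}\,\sup_{C \in \cC(x)}\bigl\{\VV(D) + \E\abr{C,D}\bigr\} = \inf_{D}\bigl\{\VV(D) + x\,\phi^*(D)\bigr\},
\]
where $\phi^*(D) := \sup_{C \in \cC(1)} \E\abr{C,D}$. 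Since the polar relation characterises $D \in \cD(y)$ by $\phi^*(D) \leq y$, a short rearrangement (grouping $D$ by the value of $\phi^*(D)$ and using that $\phi^*$ is positively homogeneous) rewrites the right-hand side as $\inf_{y>0}\{\mathbf{v}(y)+xy\}$, establishing~\eqref{eq:21}.

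Finally,~\eqref{eq:22} follows by Fenchel-Moreau biconjugation. The function $\mathbf{v}$ is convex on $(0,\infty)$ (from convexity of $\VV$ together with the scaling $\cD(y) = y\cD(1)$) and lower semicontinuous, the latter via the attainment in Lemma~\ref{lem:4} combined with a Komlós-Fatou extraction along minimizing sequences for $y_n \to y$; moreover $\mathbf{v}$ is proper because~\eqref{eq:21} and $\mathbf{u}(x)<\infty$ force $\mathbf{v}(y)<\infty$ for some $y$. Since~\eqref{eq:21} exhibits $\mathbf{u}$ as the pointwise infimum of the affine functions $x \mapsto \mathbf{v}(y) + xy$, it identifies the concave u.s.c.\ $\mathbf{u}$ as the Legendre biconjugate of the convex l.s.c.\ $\mathbf{v}$, so Fenchel-Moreau duality forces $\mathbf{v}(y) = \sup_{x>0}\{\mathbf{u}(x) - xy\}$. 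The principal obstacle is the minimax interchange: the set $\cC(x)$ consists of nondecreasing adapted processes possibly attaining the value $+\infty$, hence is not compact in any classical topological sense, which is precisely why Appendix~\ref{sec:minimax} develops Zitković's convex compactness and an adapted minimax theorem tailored to it.
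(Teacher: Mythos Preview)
Your approach has two genuine gaps, both at the minimax step.

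First, $\cC(x)$ is in general \emph{not} convexly compact. By Lemma~\ref{lem:12} a closed convex subset of $\cC$ is convexly compact if and only if $\{C_\infty : C \in \cA\}$ is bounded in probability, and the budget constraint $\sup_{D\in\cD(1)}\E\abr{C,D}\leq x$ does not enforce this: if, say, $D_t=e^{-t}$ lies in $\cD(1)$, then $C^n \set xe^n\,1_{(n,\infty]}$ satisfies $\E\abr{C^n,D}=x$ while $C^n_\infty=xe^n\to\infty$. The paper therefore does not apply Theorem~\ref{thm:3} on $\cC(x)$ at all, but on the truncated sets $\cC_n=\{C\in\cC:C_\infty\leq n\}$, and then carries out nontrivial limit arguments (monotone convergence on the primal side, a Koml\'os--Fatou extraction on the dual side via~\eqref{eq:43}) to let $n\uparrow\infty$ on each side of the minimax identity~\eqref{eq:42}.

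Second, your functional $\HH(C,D)=\VV(D)+\E\abr{C,D}$ has the wrong semicontinuity for Theorem~\ref{thm:3}: that theorem requires $\HH$ to be \emph{upper}-semicontinuous in the variable ranging over the convexly compact set, whereas Lemma~\ref{lem:11} gives only \emph{lower}-semicontinuity of $C\mapsto\E\abr{C,D}$. This is precisely why the paper works instead with $\HH(C,D)=\UU(C)-\E\abr{C,D}$ over $\cC_n\times\cD(y)$: on the bounded set $\cC_n$ the map $C\mapsto\UU(C)$ is continuous by dominated convergence, and $-\E\abr{C,D}$ is upper-semicontinuous, so the hypotheses are met. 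A consequence is that the paper establishes~\eqref{eq:22} directly and then deduces~\eqref{eq:21} from the concavity and finiteness of $\mathbf{u}$ via standard biconjugation, thereby avoiding your separate (and only sketched) verification that $\mathbf{v}$ is lower-semicontinuous.
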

\begin{proof}
  The primal value function $\mathbf{u}$ is, by assumption, finite at
  some point $x>0$. Its concavity then yields that it is finite and,
  thus, continuous on all of $(0,\infty)$. Therefore, by classical
  duality results (cf., e.g., Theorem~12.2 in \citet{Rock:70}),
  \eqref{eq:21} follows from~\eqref{eq:22}. 

  Let us first argue that ``$\geq$'' holds in~\eqref{eq:22}. So take
  $C \in \cC(x)$ and $D \in \cD(y)$. Then $\E \abr{C,D} \leq xy$ and,
  by equation~\eqref{eq:15} of Theorem~\ref{thm:1}:
  \begin{displaymath}
    \UU(C)-xy \leq \VV(D)+\E\abr{C,D}-xy \leq \VV(D).
  \end{displaymath}
  Taking the supremum over $C \in \cC(x)$ and the infimum over $D \in
  \cD(y)$ in this relation yields ``$\geq$'' in~\eqref{eq:22}.

  To obtain that also ``$\leq$'' holds in~\eqref{eq:22}, we shall
  employ the Minimax Theorem~\ref{thm:3} from the appendix with
  \begin{itemize}
  \item $\cA \set \cC_n \set \cbr{C \in \cC \;:\: C_\infty \leq n}$
    where $n \in \cbr{1,2,\dots}$, a convexly compact subset of the
    space of left-continuous processes with bounded total variation
    endowed with the metric $\dist$ of~\eqref{eq:2}; see
    Lemma~\ref{lem:12}.
 \item $\cB \set \cD(y)$ which can be viewed as a convex, closed
   subset of the space of right-continuous processes with
   $\P$-integrable total variation endowed with convergence with
   respect to the distance $\dist$ of~\eqref{eq:2} because $\E D_0 =\E
   \abr{\mathbf{1},D}\leq y$ by assumption on $\cD(y)=y\cD(1)$; and
   with
 \item $\HH(C,D) \set \UU(C)-\E\abr{C,D}$, which is convex (even
   linear) in $D \in \cB=\cD(y)$ and concave and upper-semicontinuous
   in $C \in \cA =\cC_n$, because, with respect to the metric $\dist$,
   $\UU$ is continuous on $\cC_n$ by dominated convergence and
   $\E\abr{.,D}$ is lower-semicontinuous due to Lemma~\ref{lem:11}.
 \end{itemize}
 We thus obtain that, for $n=1,2,\dots$,
 \begin{align}\label{eq:42}
   \sup_{C \in \cC _n} \inf_{D \in \cD(y)}
   \cbr{\UU(C)-\E\abr{C,D}} = 
  \inf_{D \in \cD(y)} \sup_{C \in \cC_n} \cbr{\UU(C)-\E\abr{C,D}}
 \end{align}

 Let us next prove that, as $n \uparrow \infty$, the left side
 of~\eqref{eq:42} converges to $\sup_{0 \leq x
   <\infty}\cbr{\mathbf{u}(x)-xy}$. Clearly, with $\pi(C) \set \sup_{D
   \in \cD(1)} \E\abr{C,D}$ the limit of left side of~\eqref{eq:42}
 can be written as
 \begin{align*}
    \sup_{C \in \cC  \text{ bounded}} \inf_{D \in \cD(y)}
   \cbr{\UU(C)-\pi(C)y} &
    = \sup_{0 \leq x <\infty} \sup_{C \in \cC(x) \text{ bounded}}
    \cbr{\UU(C)-xy} \\&
   = \sup_{0 \leq x <\infty} \cbr{\mathbf{u}(x)-xy}
 \end{align*}
 where the last identity holds because by monotone convergence 
 $\UU(C)= \lim_n \UU(C\wedge n)$, $C \in \cC$, so that the utility of
 any $C$ can be approximated by the utility of bounded controls.

 Now the proof of the present lemma will be accomplished once we have
 shown that, as $n \uparrow \infty$, the right side of~\eqref{eq:42}
 tends to a limit which is not smaller than $\mathbf{v}(y)$. To this
 end, we first observe that
 \begin{equation}\label{eq:43}
   \sup_{C \in \cC_n}\cbr{\UU(C)-\E\abr{C,D}} 
  = \E\int_0^\infty V_n(-\bdot{\breve{D}})  \,d\mu \text{ for any } D \in\cD(y)
 \end{equation}
 where $V_n$ is given by~\eqref{eq:34}. Indeed, because $\opt{D} \geq
 \opt{\breve{D}}$, we have 
 \begin{displaymath}
   \UU(C)-\E\abr{C,D} \leq
   \UU(C)-\E\abr{C,\breve{D}} = \E\int_0^\infty
   U(C)-C(-\bdot{\breve{D}})\,d\mu,
 \end{displaymath}
 where for $C \in \cC_n$ the last integrand is not larger than
 $V_n(-\bdot{\breve{D}})$. This proves ``$\leq$''
 in~\eqref{eq:43}. For ``$\geq$'' we just need to observe that $C \set
 -V'_n(-\bdot{\breve{D}}) = -V'(-\bdot{\breve{D}}) \wedge n \in \cC_n$
 will give equality in both of the preceding estimates.

 Due to~\eqref{eq:43}, we can take $D^n \in \cD(y)$ with $0 \leq
 \delta^n \set-\bdot{\breve{D}}^n$ such that $\E \int_0^\infty
 V_n(\delta^n)\,d\mu$ converges to the limit of the right side
 of~\eqref{eq:42} as $n \uparrow \infty$.  By Lemma~A1.1
 in~\cite{DelbSch:94} there are $\tilde{\delta}^n \in
 \conv\cbr{\delta^n,\delta^{n+1},\dots}$, $n=1,2,\dots$, which
 converge $\P \otimes \mu$-almost everywhere to some $\delta^* \geq
 0$. Because all $\delta^n$ are contained in $\bdot{\cD}(y)$, so are,
 by convexity of this set, all the $\tilde{\delta}^n$. In fact, also
 $\delta^* \in \bdot{\cD}(y)$ because $D^* \set \int_.^\infty
 \delta^*\,d\mu \in \cD(y)$ as by Fatou's lemma
 \begin{align*}
   \E \abr{C,D^*} 
  = \E \int_0^\infty C\delta^*\,d\mu 
& \leq \liminf_n \E \int_0^\infty C\tilde{\delta}^n\,d\mu\\
& = \liminf_n\E \abr{C,\opt{\int_.^\infty \tilde{\delta}^n\,d\mu}} 
 \leq xy
 \end{align*}
for any $C \in \cC(x)$.

 It follows that for $N=1,2,\dots$:
 \begin{align*}
  \lim_n & \inf_{D \in \cD(y)} \sup_{C \in \cC_n} \cbr{\UU(C)-\E\abr{C,D}} 
  = \lim_n  \E \int_0^\infty V_n(\delta^n)\,d\mu\\
 &\geq \liminf_n  \E \int_0^\infty V_N(\delta^n)\,d\mu
 \geq \liminf_n  \E \int_0^\infty V_N(\tilde{\delta}^n)\,d\mu
 \geq \E \int_0^\infty V_N(\delta^*)\,d\mu\\
 &\longto_{N \uparrow \infty} \E \int_0^\infty V(\delta^*)\,d\mu
 \geq \mathbf{v}(y)
 \end{align*}
 where the first estimate and the convergence follow from $V_n \geq
 V_N \nearrow V$ for $n \geq N \uparrow \infty$. The second estimate
 is due to the convexity of $V_N$ and the third is due to Fatou's
 lemma. The last estimate is immediate from Lemma~\ref{lem:1} and
 $\delta^* \in \bdot{\cD}(y)$.
\end{proof}

\begin{Lemma}\label{lem:6}
  Under the assumptions of Theorem~\ref{thm:2}, $\mathbf{v}$
  of~\eqref{eq:20} is real-valued, strictly convex and strictly
  decreasing on $(0,\infty)$. Moreover, $\mathbf{u}$ of~\eqref{eq:19}
  is continuously differentiable on $(0,\infty)$ with
  $\mathbf{u}'(\infty)=0$.
\end{Lemma}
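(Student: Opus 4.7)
The plan is to first establish real-valuedness of $\mathbf{v}$, from which the remaining claims will follow almost mechanically from Lemma~\ref{lem:4}, the scaling structure of $\cD(y)$, and standard convex analysis. By the conjugacy~\eqref{eq:22} of Lemma~\ref{lem:5}, $\mathbf{v}(y) = \sup_{x>0}\{\mathbf{u}(x) - xy\}$, so it suffices to show that $\mathbf{u}(x)/x \to 0$ as $x \to \infty$. To this end I would extract from Assumption~\ref{asp:1}(iii) the pointwise bound
\begin{equation*}
  U_t(\omega, xc) \leq x^\gamma \bigl(U_t(\omega, c) + U_t(\omega, C^\gamma_t(\omega))\bigr), \quad x \geq 1,\ c \geq 0,
\end{equation*}
by splitting according to whether $c > C^\gamma_t$ (direct use of~\eqref{eq:5}, with the boundary case handled by continuity) or $c \leq C^\gamma_t$ (monotonicity of $U_t$ reduces the problem to~\eqref{eq:5} applied at $C^\gamma_t$). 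Integrating against $\mu$ and taking the supremum over $C' \in \cC(1)$, while using $\cC(x) = x\cC(1)$ together with the monotonicity of $\UU$ in $C$, yields $\mathbf{u}(x) \leq x^\gamma (\mathbf{u}(1) + K)$ with $K \set \E\int_0^\infty U_t(C^\gamma_t)\,d\mu_t < \infty$ by Assumption~\ref{asp:1}(iii). Hence $\mathbf{u}(x)/x \to 0$, and together with $\mathbf{v} \geq 0$ this gives real-valuedness.

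With finiteness secured, strict convexity of $\mathbf{v}$ on $(0,\infty) = \{\mathbf{v} < \infty\}$ is already recorded in Lemma~\ref{lem:4}. For strict monotonicity I would first observe that $\cD(y_1) \subseteq \cD(y_2)$ whenever $y_1 \leq y_2$: writing $D = y_1 D'$ with $D' \in \cD(1)$, solidity of $\cD(1)$ applied to $(y_1/y_2) D' \leq D'$ places $(y_1/y_2) D'$ back into $\cD(1)$, whence $D \in y_2 \cD(1) = \cD(y_2)$. Thus $\mathbf{v}$ is weakly decreasing, and since a strictly convex function on an open interval admits no flat segment, weak monotonicity upgrades to strict monotonicity.

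Continuous differentiability of $\mathbf{u}$ on $(0,\infty)$ will follow from the classical fact that the concave conjugate of a strictly convex function is differentiable wherever it is finite (Rockafellar, Theorem~26.3), and pointwise differentiability of a concave function on an open interval automatically upgrades to $C^1$. For $\mathbf{u}'(\infty) = 0$, I would use $\mathbf{u}(0) = 0$ (forced by $\cC(0) = \{\mathbf{0}\}$ and $U_t(0) = 0$) together with nonnegativity and monotonicity of $\mathbf{u}$ inherited from the scaling and solidity of $\cC(x)$; concavity then gives
\begin{equation*}
  0 \leq \mathbf{u}'(2x) \leq \frac{\mathbf{u}(2x) - \mathbf{u}(x)}{x} \leq \frac{2\,\mathbf{u}(2x)}{2x} \longto 0 \text{ as } x \to \infty.
\end{equation*}

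The main obstacle is the integrated asymptotic elasticity bound in the first step. The time- and scenario-dependence of $U_t$, the random threshold $C^\gamma_t$, and the strict inequality in~\eqref{eq:5} all demand a careful case analysis before the $x^\gamma$-growth of $\mathbf{u}$ emerges in a form that integrates cleanly against $\mu$ and survives the supremum over $\cC(1)$. Once this growth estimate is in hand, the remaining assertions reduce to Lemmas~\ref{lem:4}, \ref{lem:5} and elementary convex analysis on the real line.
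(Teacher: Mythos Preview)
Your proposal is correct and follows essentially the same route as the paper: establish $\mathbf{u}(x)/x\to 0$ via the asymptotic elasticity condition, deduce finiteness of $\mathbf{v}$ from the conjugacy of Lemma~\ref{lem:5}, invoke Lemma~\ref{lem:4} for strict convexity, and then read off strict monotonicity, differentiability of $\mathbf{u}$ (via Rockafellar), and $\mathbf{u}'(\infty)=0$ from concavity. The only cosmetic difference is your packaging of the elasticity bound as a single pointwise inequality $U_t(xc)\leq x^\gamma(U_t(c)+U_t(C^\gamma_t))$ via a case split, whereas the paper splits the $\P\otimes\mu$-integral according to $\{C^{x,\epsilon}\geq C^\gamma\}$; your version is arguably tidier and avoids a small imprecision in the paper's application of~\eqref{eq:5}.
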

\begin{proof}
  Let us first show that even
  \begin{equation}
    \label{eq:44}
    \lim_{x \uparrow \infty} \mathbf{u}(x)/x=0.
  \end{equation}
  Indeed, since $\mathbf{u}$ takes real values by Lemma~\ref{lem:5},
  we can find, for $\epsilon>0$ and $x>0$, a $C^{x,\epsilon} \in
  \cC(x)$ such that $\mathbf{u}(x) \leq
  \UU(C^{x,\epsilon})+\epsilon$. Then, by the equivalent
  formulation~\eqref{eq:5} of our asymptotic elasticity condition~\eqref{eq:3},
  \begin{displaymath}
    U(C^{x,\epsilon}) \leq x^{\gamma} U(C^{x,\epsilon}/x) \text{ on }
    \cbr{C^{x,\epsilon} \geq C^\gamma}.
  \end{displaymath}
  Upon integration with respect to $\P \otimes \mu$ we thus obtain
  \begin{align*}
    \mathbf{u}(x) &\leq x^{\gamma} \E \int_0^\infty
    U(C^{x,\epsilon}/x)
    \,d\mu+\E \int_0^\infty U(C^\gamma)\,d\mu+\epsilon\\
    & \leq x^\gamma \mathbf{u}(1)+\E \int_0^\infty U(C^\gamma)\,d\mu+\epsilon,
  \end{align*}
  where we used that $C^{x,\epsilon}/x \in \cC(1)$.  Since $\gamma \in
  [0,1)$, our claim~\eqref{eq:44} now follows upon division by $x
  \uparrow \infty$.

  In conjunction with~\eqref{eq:44}, the duality between $\mathbf{u}$
  and $\mathbf{v}$ established in Lemma~\ref{lem:5} yields that
  $\mathbf{v}(y)<\infty$ for $y>0$.  By Lemma~\ref{lem:1},
  $\mathbf{v}$ is thus strictly convex on $(0,\infty)$. This
  immediately implies that $\mathbf{v}$ is strictly decreasing, By
  classical convex duality results (e.g. \citet{Rock:70}), strict
  convexity of $\mathbf{v}$ implies the differentiability of its
  conjugate $\mathbf{u}$ on $(0,\infty)$. By concavity and
  monotonicity, $0 \leq \mathbf{u}'(x) \leq
  \mathbf{u}(x)/x$. So~\eqref{eq:44} also yields
  $\mathbf{u}'(\infty)=0$.
\end{proof}

The following lemma is a minor adaptation of Lemmas~3.6 and~3.7
in~\citet{KramSch:99}:

\begin{Lemma}\label{lem:7}
  Under the assumptions of Theorem~\ref{thm:2}, the minimizers
  $\delta^y \in \bdot{\cD}(y)$ from Lemma~\ref{lem:4} depend
  continuously on $y>0$ in the sense that the mapping
\begin{displaymath}
  (0,\infty) \ni y \mapsto (\delta^y,
    V(\delta^y),-V'(\delta^y)\delta^y) \in \mathbf{L}^0(\P\otimes\mu) \times
    \mathbf{L}^1(\P\otimes\mu) \times \mathbf{L}^1(\P\otimes\mu)
\end{displaymath}
is continuous.
\end{Lemma}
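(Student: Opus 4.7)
The plan is to adapt the proofs of Lemmas~3.6 and~3.7 in \citet{KramSch:99} by exploiting two structural features of our setting: the cone $\bdot{\cD}(\cdot)$ is positively homogeneous and convex, and the asymptotic elasticity hypothesis in Assumption~\ref{asp:1} gives the one-sided bound~\eqref{eq:7} on $-V'(d)d$. Throughout, fix $y>0$ and a sequence $y_n\to y$, and abbreviate $\delta^n\set\delta^{y_n}$, $\delta\set\delta^y$.

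First I would establish convergence of $\delta^n$ to $\delta$ in $(\P\otimes\mu)$-measure. By Lemma~\ref{lem:6}, $\mathbf{v}$ is real-valued and (being strictly convex) continuous on $(0,\infty)$, so $\mathbf{v}(y_n)\to\mathbf{v}(y)$. The midpoint $\alpha^n\set\tfrac{1}{2}(\delta^n+\delta)$ belongs to $\bdot{\cD}\bigl(\tfrac{y_n+y}{2}\bigr)$ by convexity of that cone, so optimality and convexity of $V$ yield
\begin{equation*}
\mathbf{v}\!\left(\tfrac{y_n+y}{2}\right) \leq \E\int_0^\infty V(\alpha^n)\,d\mu \leq \tfrac{1}{2}\mathbf{v}(y_n)+\tfrac{1}{2}\mathbf{v}(y).
\end{equation*}
Both outer terms converge to $\mathbf{v}(y)$, so the same is true of the middle one; hence the nonnegative quantity $\Delta^n\set \tfrac{1}{2}V(\delta^n)+\tfrac{1}{2}V(\delta)-V(\alpha^n)$ satisfies $\E\int \Delta^n\,d\mu\to 0$, and thus $\Delta^n\to 0$ in $(\P\otimes\mu)$-measure. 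Strict convexity of $V$ on $(0,\infty)$, combined with the boundary behaviour $V(0+)=U(\infty)\in[0,\infty]$ and $V(\infty)=0$, rules out any cluster value of $\delta^n$ different from $\delta$, and a standard subsequence argument gives $\delta^n\to\delta$ in $(\P\otimes\mu)$-measure.

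Next I would upgrade to $\mathbf{L}^1(\P\otimes\mu)$-convergence of $V(\delta^n)$. By continuity of $V$ we have convergence in measure; moreover $V(\delta^n)\geq 0$ is integrable, and $\E\int V(\delta^n)\,d\mu=\mathbf{v}(y_n)\to\mathbf{v}(y)=\E\int V(\delta)\,d\mu<\infty$, so Scheff\'e's lemma closes this step. The analogous claim for $-V'(\delta^n)\delta^n$ follows once uniform integrability is established: continuity of $V'$ on $(0,\infty)$ already gives convergence in measure, so by Vitali's theorem only a uniform integrability bound is missing. The asymptotic elasticity bound~\eqref{eq:7} yields
\begin{equation*}
-V'(\delta^n)\delta^n \leq \tfrac{\gamma}{1-\gamma}V(\delta^n) \text{ on } \{\delta^n<D^\gamma\},
\end{equation*}
while on the complementary set the monotonicity of $-V'$ gives $-V'(\delta^n)\leq C^\gamma$ and hence $-V'(\delta^n)\delta^n\leq C^\gamma\delta^n$. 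Uniform integrability of the first piece follows from the $\mathbf{L}^1$-convergence of $V(\delta^n)$ just established; for the second, I would rewrite $\E\int C^\gamma\delta^n\,d\mu$ via Fubini as the pairing $\E\abr{C^\gamma,D^n}$ with $D^n\set\int_.^\infty\delta^n\,d\mu\in\cD(y_n)$, and then invoke the multiplicative form~\eqref{eq:8} of the asymptotic elasticity along the lines of Lemma~6.3 of~\cite{KramSch:99} to produce the required uniform bound.

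The main obstacle is this last uniform-integrability step. In Kramkov--Schachermayer the primal side is directly controlled by polar duality with the dual budget set, but here the reference control $C^\gamma$ need not belong to any $\cC(x)$, so the only available handle on $\E\abr{C^\gamma,D^n}$ is through the asymptotic elasticity inequality transferred onto $V(\delta^n)$, whose $\mathbf{L}^1$-limit we have already secured. Once the estimate is in place, Vitali's theorem delivers the $\mathbf{L}^1$-convergence of $-V'(\delta^n)\delta^n$, completing the continuity statement.
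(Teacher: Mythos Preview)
Your first two steps---convergence of $\delta^{y_n}$ in measure via the midpoint argument, and $\mathbf L^1$-convergence of $V(\delta^{y_n})$ via Scheff\'e---match the paper's approach, as does your use of~\eqref{eq:7} on the set $\{\delta^{y_n}<D^\gamma\}$. The gap is in your treatment of the complementary set $\{\delta^{y_n}\geq D^\gamma\}$.

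Your bound there, $-V'(\delta^{y_n})\delta^{y_n}\leq C^\gamma\delta^{y_n}$, is correct but leads you into trouble: $C^\gamma$ is merely a predictable process with $\E\int U(C^\gamma)\,d\mu<\infty$ (Assumption~\ref{asp:1}), not an element of $\cC$, so the pairing $\E\abr{C^\gamma,D^n}$ is not defined in the sense of~\eqref{eq:1}, and even the integral $\E\int C^\gamma\delta^{y_n}\,d\mu$ has no obvious uniform control from~\eqref{eq:8} or polarity. You yourself flag this as ``the main obstacle'' without resolving it. The paper's argument on this set is instead elementary and avoids the detour entirely: writing $C^n\set -V'(\delta^{y_n})$, the set $\{\delta^{y_n}\geq D^\gamma\}$ is exactly $\{C^n\leq C^\gamma\}$, and there the concavity inequality $cU'(c)\leq U(c)$ (valid for all $c\geq 0$ since $U(0)=0$) together with monotonicity of $U$ gives
\[
0\leq -V'(\delta^{y_n})\delta^{y_n}=C^nU'(C^n)\leq U(C^n)\leq U(C^\gamma)\in\mathbf L^1(\P\otimes\mu).
\]
This is a single fixed integrable dominant, so uniform integrability on $\{\delta^{y_n}\geq D^\gamma\}$ is immediate, and the proof closes by Vitali exactly as you intended.
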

\begin{proof}
  That the above mapping is indeed defined on all of $(0,\infty)$ is
  due to the finiteness of $\mathbf{v}$ on $(0,\infty)$ established in
  Lemma~\ref{lem:6}. 

  We first prove that $\delta^{y_n} \to \delta^y$ in
  $\mathbf{L}^0(\P\otimes\mu)$ for any $y_n \to y \in (0,\infty)$. If
  $\delta^{y_n}$ does not converge to $\delta^y$ in this sense then
  there is $\epsilon>0$ such that
  \begin{displaymath}
    \limsup_n \P\otimes\mu\left[|\delta^{y_n}-\delta^y|>\epsilon, 
    \delta^{y_n}+\delta^y<1/\epsilon\right]>\epsilon,
  \end{displaymath}
  where we recall that $(\delta^{y_n})_{n=1,2,\dots}$ is bounded in
  $\mathbf{L}^1(\P\otimes\mu)$ because $\E\int_0^\infty \delta^{y_n}
  \,d\mu \leq y_n \to y>0$ by definition of $\bdot{\cD}(y^n)$.
  Observe now that by strict convexity of $V$, $\delta^n \set
  \frac{1}{2}(\delta^{y_n}+\delta^y)$ satisfies
  \begin{displaymath}
  V(\delta^n) \leq \frac{1}{2}(V(\delta^{y_n})+V(\delta^y))  
  \end{displaymath}
  and, for some sufficiently small $\eta>0$, also
  \begin{displaymath}
    \limsup_n \P\otimes\mu\left[V(\delta^n) \leq
      \frac{1}{2}(V(\delta^{y_n})+V(\delta^y)) -\eta\right]>\eta.
  \end{displaymath}
 Upon integration with respect to $\P\otimes\mu$ it follows that
  \begin{align*}
    \limsup_n \E\int_0^\infty V(\delta^n)\,d\mu 
   &\leq \limsup_n \frac{1}{2}\E\int_0^\infty
   \left(V(\delta^{y_n})+V(\delta^y)\right)\,d\mu-\eta^2\\
  &=\limsup_n \frac{1}{2}(\mathbf{v}(y^n)+\mathbf{v}(y))-\eta^2= \mathbf{v}(y)-\eta^2
  \end{align*}
  where the last identity is due to the continuity of the convex
  function $\mathbf{v}$. On the other hand, by the scaling property
  and convexity of the sets $\cD(y)=y\cD(1)$, we have $\delta^n \in
  \bdot{\cD}(y\vee y^n)$ and therefore, by Lemma~\ref{lem:4},
  \begin{align*}
    \mathbf{v}(y) = \lim_n \mathbf{v}(y \vee y^n) \leq \liminf_n
    \E\int_0^\infty V(\delta^n)\,d\mu.
  \end{align*}
  This clearly contradicts the preceding inequality and so  we
  must have indeed that $\delta^{y_n}
  \to \delta^y$ in $\mathbf{L}^0(\P\otimes\mu)$.

  Convergence of $V(\delta^{y_n}) \geq 0$ in $\mathbf{L}^1(\P\otimes\mu)$ now
  follows from convergence in $\mathbf{L}^0(\P\otimes\mu)$ and 
  \begin{displaymath}
    \E
  \int_0^\infty V(\delta^{y_n})\,d\mu=\mathbf{v}(y^n) \longto_{n
    \uparrow \infty} \mathbf{v}(y)=\E
  \int_0^\infty V(\delta^y)\,d\mu.
  \end{displaymath}
  Moreover, $\mathbf{L}^1(\P\otimes\mu)$-convergence of
  $(-V'(\delta^{y_n})\delta^{y_n})_{n=1,2,\dots}$ will follow once we
  have established the uniform $\P\otimes\mu$-integrability of this
  sequence. Our uniform asymptotic elasticity condition~\eqref{eq:7} gives
  \begin{equation}
    \label{eq:45}
    (1-\gamma) (-V'(\delta^{y_n}))\delta^{y_n} \leq \gamma
    V(\delta^{y_n}) \text{ on } \cbr{\delta^{y_n} < D^{\gamma}}
  \end{equation}
  where $\gamma \in (0,1)$ and where $D^{\gamma} \set
  U'(C^{\gamma})$. Moreover, we have, with $C^n \set
  -V'(\delta^{y_n})$, that
  \begin{equation}
    \label{eq:46}
   0\leq (-V'(\delta^{y_n}))\delta^{y_n} = C^n U'(C^n) \leq U(C^n)
   \leq U(C^\gamma)
  \end{equation}
  on $\cbr{\delta^{y_n} \geq D^{\gamma}}=\cbr{C^n \leq C^\gamma}$.  In
  conjunction with the already established
  $\mathbf{L}^1(\P\otimes\mu)$-convergence of
  $(V(\delta^{y_n}))_{n=1,2,\dots}$ and our assumption that
  $U(C^\gamma)$ is $\P \otimes \mu$-integrable, the combination of the
  estimates~\eqref{eq:45} and~\eqref{eq:46} yields the desired
  uniform integrability.
\end{proof}

We now can use a variant of the argument in Lemma~3.8 of
\citet{KramSch:99} to deduce:

\begin{Lemma}\label{lem:8}
  Under the assumptions of Theorem~\ref{thm:2}, the dual value
  function $\mathbf{v}$ is continuously differentiable on $(0,\infty)$
  with
  \begin{equation}
    \label{eq:47}
    \mathbf{v}'(y)y = \E \int_0^\infty V'(\delta^y)\delta^y \,d\mu,
    \quad y>0,
  \end{equation}
  and $\mathbf{v}'(0)=-\infty$, $\mathbf{v}'(\infty)=0$. Moreover,
  $\mathbf{u}$ is strictly increasing and strictly concave on
  $(0,\infty)$ with $\mathbf{u}'(0)=\infty$.
\end{Lemma}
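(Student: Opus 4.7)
The plan is to derive the derivative formula for $\mathbf{v}$ from the scaling of $\bdot{\cD}(y)$ and the continuity of the minimizers established in Lemma~\ref{lem:7}, and then to read off the boundary behavior and the strict concavity/monotonicity of $\mathbf{u}$ from classical Fenchel duality combined with structural arguments on $\cC(x)=x\cC(1)$.

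For the formula~\eqref{eq:47}, the scaling $\cD(\lambda y)=\lambda\cD(y)$ gives $\lambda\delta^y\in\bdot{\cD}(\lambda y)$ and $\delta^{\lambda y}/\lambda\in\bdot{\cD}(y)$ for every $\lambda>0$. Testing~\eqref{eq:40} with these choices produces the sandwich
\begin{equation*}
  \E\int_0^\infty \bigl[V(\delta^{\lambda y})-V(\delta^{\lambda y}/\lambda)\bigr]\,d\mu \;\leq\; \mathbf{v}(\lambda y)-\mathbf{v}(y) \;\leq\; \E\int_0^\infty \bigl[V(\lambda\delta^y)-V(\delta^y)\bigr]\,d\mu.
\end{equation*}
Dividing by $(\lambda-1)y$ and using the convex estimates $V'(a)(b-a)\leq V(b)-V(a)\leq V'(b)(b-a)$ turns each side into an integral of quantities of the form $V'(\cdot)\delta^{(\cdot)}/y$, uniformly dominated in $L^1(\P\otimes\mu)$ near $\lambda=1$ by the asymptotic-elasticity bound~\eqref{eq:7} on $\{\delta<D^\gamma\}$ (giving $-V'(d)d\leq\tfrac{\gamma}{1-\gamma}V(d)\leq\tfrac{\gamma}{1-\gamma}V(\delta^y)\in L^1$) together with $-V'(d)d\leq U(C^\gamma)\in L^1$ on $\{\delta\geq D^\gamma\}$, exactly as in~\eqref{eq:45}--\eqref{eq:46}. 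Dominated convergence, combined with the $L^0$- and $L^1$-continuities in Lemma~\ref{lem:7}, then lets $\lambda\to 1$ squeeze both one-sided difference quotients of $\mathbf{v}$ at $y$ to $\tfrac{1}{y}\E\int_0^\infty V'(\delta^y)\delta^y\,d\mu$; continuity of $\mathbf{v}'$ is immediate from the $L^1$-continuity of $V'(\delta^y)\delta^y$.

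For the boundary behavior, $\mathbf{v}'(\infty)=0$ is automatic since $\mathbf{v}\geq 0$ is convex and decreasing, so any strictly negative limit would drive $\mathbf{v}$ to $-\infty$. To get $\mathbf{v}'(0+)=-\infty$ and strict concavity of $\mathbf{u}$, I would first observe that $\mathbf{u}$ is strictly increasing: for $0<x<x'$, any maximizer $C^x$ for $\mathbf{u}(x)$ yields $(x'/x)C^x\in\cC(x')$ with $\UU((x'/x)C^x)>\UU(C^x)=\mathbf{u}(x)$ by the strict monotonicity of $U$. Combined with the strict concavity of $\UU$ (from pointwise strict concavity of $U$), the midpoint estimate $\mathbf{u}((a+b)/2)\geq\UU((C^a+C^b)/2)>\tfrac{1}{2}(\mathbf{u}(a)+\mathbf{u}(b))$ excludes affine behavior on any interval (the boundary case $C^a=C^b$ would force $\mathbf{u}(a)=\mathbf{u}(b)$, contradicting strict monotonicity), so $\mathbf{u}$ is strictly concave on $(0,\infty)$. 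Strict concavity and Lemma~\ref{lem:6}'s $\mathbf{u}'(\infty)=0$ force $(\mathbf{u}')^{-1}(y)\to\infty$ as $y\to 0$, and the Fenchel envelope relation $\mathbf{v}'(y)=-(\mathbf{u}')^{-1}(y)$ gives $\mathbf{v}'(0+)=-\infty$. The symmetric argument applied to the $C^1$ strictly convex $\mathbf{v}$ (Lemma~\ref{lem:4}) with $\mathbf{v}'(\infty)=0$ yields $\mathbf{u}'(0+)=\infty$.

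The main obstacle is the derivative formula: the $L^0$-convergence from Lemma~\ref{lem:7} alone is too weak to exchange limit and integral in the sandwich, and the sandwich integrands $V'(\lambda\delta^y)\delta^y$ and $V'(\delta^{\lambda y})\delta^{\lambda y}/\lambda$ require uniform-in-$\lambda$ $L^1$-domination. The crucial input is the asymptotic-elasticity inequality~\eqref{eq:7}, exactly as in the proof of Lemma~\ref{lem:7}, which supplies the dominations that power dominated convergence. The boundary and strict-concavity conclusions are then routine convex analysis.
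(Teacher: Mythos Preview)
Your derivation of the formula~\eqref{eq:47} follows essentially the same sandwich argument as the paper: scale $\delta^y$ and $\delta^{\lambda y}$ by $\lambda$ to produce competitors in $\bdot{\cD}(\lambda y)$ and $\bdot{\cD}(y)$, bracket the difference quotient, and pass to the limit using asymptotic elasticity plus the continuities of Lemma~\ref{lem:7}. One small inaccuracy: for the lower bound the relevant integrand is $-V'(\delta^{\lambda y}/\lambda)\delta^{\lambda y}$, and bounding this by $\tfrac{\gamma}{1-\gamma}V(\delta^y)$ is not immediate from~\eqref{eq:7} alone; the paper needs the multiplicative estimate~\eqref{eq:8} to pass from $V(\delta^{\lambda y}/\lambda)$ back to $V(\delta^{\lambda y})$, whose uniform integrability then comes from Lemma~\ref{lem:7}. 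For the upper bound the paper uses monotone rather than dominated convergence, but your domination by $V(\delta^y)$ (since $V$ is decreasing and $\lambda>1$) works just as well.

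The genuine gap is in your route to strict concavity of $\mathbf{u}$. You argue via primal maximizers $C^a,C^b$ and the strict concavity of $\UU$, but at this stage of the paper the existence of primal maximizers has not yet been established: Lemma~\ref{lem:9} supplies them, and its proof relies on~\eqref{eq:47} from the present lemma, so invoking maximizers here is circular. The same circularity infects your argument for strict monotonicity of $\mathbf{u}$ (which also needs a maximizer $C^x$) and, downstream, your derivation of $\mathbf{v}'(0+)=-\infty$ via $(\mathbf{u}')^{-1}$. The paper avoids all of this: once $\mathbf{v}$ is $C^1$ (from~\eqref{eq:47}) and strictly convex (from Lemma~\ref{lem:4}), Theorem~26.3 of Rockafellar gives strict concavity of $\mathbf{u}$ directly; strict monotonicity of $\mathbf{u}$ then follows from strict concavity together with $\mathbf{u}'(\infty)=0$; and the boundary values $\mathbf{v}'(0)=-\infty$, $\mathbf{u}'(0)=\infty$ come from the classical duality equivalences $\mathbf{v}'(0)=-\infty \Leftrightarrow \mathbf{u}'(\infty)=0$ (already in Lemma~\ref{lem:6}) and $\mathbf{v}'(\infty)=0 \Leftrightarrow \mathbf{u}'(0)=\infty$. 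You already have all the ingredients for this cleaner route; switching to it removes the circularity.
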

\begin{proof}
  We first observe that for $y>0$ and $\lambda_n \downarrow 1$,
  \begin{equation}
    \label{eq:48}
    (-V'(\delta^{\lambda_n y}/\lambda_n)
  \delta^{\lambda_ny})_{n=1,2,\dots} \text{ is uniformly $\P\otimes\mu$-integbrale.}
  \end{equation}
  Indeed, by employing successively our dual asymptotic elasticity
  estimates~\eqref{eq:7} and~\eqref{eq:8} and also the monotonicity
  of $V$, we get
 \begin{align*}
   0 \leq -V'(\delta^{\lambda_n y}/\lambda_n) \delta^{\lambda_n y} 
  & \leq \frac{\gamma}{1-\gamma} V(\delta^{\lambda_n y}/\lambda_n) \\
  & \leq \frac{\gamma}{1-\gamma} V((\delta^{\lambda_n y} \wedge D^\gamma)/\lambda_n) \\
  & \leq \frac{\gamma}{1-\gamma}
  \left(\frac{1}{\lambda_n}\right)^{\frac{\gamma}{1-\gamma}}
  V(\delta^{\lambda_n y} \wedge D^{\gamma}) \\
  &= \frac{\gamma}{1-\gamma}
  \left(\frac{1}{\lambda_n}\right)^{\frac{\gamma}{1-\gamma}}
  \left(V(\delta^{\lambda_n y}) \vee V(D^{\gamma})\right)
 \end{align*}
 on $\cbr{\delta^{\lambda_n y}/\lambda_n \leq D^\gamma}$. With
 $C^{\lambda_n} \set -V'(\delta^{\lambda_n y}/\lambda_n)$ the
 complement of this set is $\cbr{\delta^{\lambda_n y}/\lambda_n >
   D^\gamma}=\cbr{C^\lambda_n < C^\gamma}$ and so, on this set,
 \begin{equation*}
   0 \leq -V'(\delta^{\lambda_n y}/\lambda_n) \delta^{\lambda_n y}
   =U'(C^{\lambda_n})C^\lambda_n\lambda_n\leq U(C^{\lambda_n})
   \lambda_n \leq U(C^\gamma)\lambda_n.
 \end{equation*}
 Hence, to obtain our claim~\eqref{eq:48} it suffices to observe that
 on either set we find an upper bound which is uniformly
 integrable. This is clear for
 $(U(C^\gamma)\lambda_n)_{n=1,2,\dots}$. On the other hand,
 Lemma~\ref{lem:7} yields in particular the uniform
 $\P\otimes\mu$-integrability of $(V(\delta^{\lambda_n
   y}))_{n=1,2,\dots}$ and finally $V(D^{\gamma}) \leq U(C^\gamma) \in
 \mathbf{L}^1(\P\otimes\mu)$.

  We now can argue exactly as in Lemma~3.8 of \citet{KramSch:99} and
  pass to the limit $\lambda \downarrow 1$ in
  \begin{displaymath}
    \frac{\mathbf{v}(\lambda y)-\mathbf{v}(y)}{\lambda -1}
  \leq \E\int_0^\infty \frac{V(\lambda
    \delta^y)-V(\delta^y)}{\lambda-1} \,d\mu
  \end{displaymath}
  by monotone convergence and, by uniform integrability, also in
  \begin{align*}
    \frac{\mathbf{v}(\lambda y)-\mathbf{v}(y)}{\lambda -1}
  &\geq \E\int_0^\infty \frac{V(
    \delta^{\lambda y})-V(\delta^{\lambda y}/\lambda)}{\lambda-1}
\,d\mu \\&\geq \E \int_0^\infty V'(\delta^{\lambda y}/\lambda)\delta^{\lambda y}\frac{1-1/\lambda}{\lambda-1}\,d\mu
  \end{align*}
 to see, respectively, that $(\mathbf{v}(\lambda
  y)-\mathbf{v}(y))/(\lambda - 1)$ has a $\limsup$ not larger and a
  $\liminf$ not smaller than the right side of~\eqref{eq:47}. The
  continuity of this expression established in Lemma~\ref{lem:7} in
  conjunction with the convexity of $\mathbf{v}$ then implies our
  claim.

  The strict monotonicity of $\mathbf{u}$ now follows from its strict
  concavity on $(0,\infty)$ which, in turn, is equivalent to the
  differentiability of its convex conjugate $\mathbf{v}$ on
  $(0,\infty)$ by classical duality results; see, e.g., Theorem~26.3 in
  \citet{Rock:70}. These same results also yield the equivalence of
  $\mathbf{v}'(\infty)=0$ and $\mathbf{u}'(0)=\infty$, and the first
  of these relations is immediate from the fact that $\mathbf{v}$ is
  strictly convex and decreasing and bounded from below (by $0$).
  Similarly, classical duality yields the equivalence of
  $\mathbf{v}'(0)=-\infty$ and $\mathbf{u}'(\infty)=0$, where the last
  relation was already established in Lemma~\ref{lem:6}.
\end{proof}

\begin{Lemma}\label{lem:9}
  Under the assumptions of Theorem~\ref{thm:2},  $C^x \set
  -V'(\delta^y)$ with $y=\mathbf{u}'(x)$ and $\delta^y$ as in
  Lemma~\ref{lem:4} is the unique control in $\cC(x)$ that
  attains $\mathbf{u}(x)=\UU(C^x)$ for $x>0$.
\end{Lemma}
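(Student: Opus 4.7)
Set $y \set \mathbf{u}'(x) \in (0,\infty)$, well-defined by Lemmas~\ref{lem:6} and~\ref{lem:8}. Let $\delta^y \in \bdot{\cD}(y)$ be the unique dual minimizer from Lemma~\ref{lem:4}, with $\delta^y \in \bdot{\cD}(D^y)$ for some $D^y \in \cD(y)$; by the uniqueness part of Lemma~\ref{lem:1}, $\delta^{D^y} = \delta^y$ and hence $\VV(D^y) = \mathbf{v}(y)$. Let $C^x \set -V'(\delta^y)$, which lies in $\cC$ by Lemma~\ref{lem:4}. The plan is first to exhibit a primal maximizer $\hat C \in \cC(x)$ via compactness, then to identify $\hat C$ with $C^x$ through Fenchel equality, and finally to deduce uniqueness.

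For existence, I would take a maximizing sequence $(C^n)_{n \geq 1} \subset \cC(x)$ with $\UU(C^n) \to \mathbf{u}(x)$ (finite by Lemma~\ref{lem:6}) and use the convex compactness machinery of Appendix~\ref{sec:minimax} (cf.\ Lemma~\ref{lem:12}) to pass to convex combinations $\tilde C^n \in \conv\{C^n, C^{n+1},\dots\}$ converging $\P \otimes \mu$-a.e.\ to some $\hat C \in \cC$. Closedness of $\cC(x)$ under the metric~\eqref{eq:2} then places $\hat C \in \cC(x)$; concavity of $\UU$ yields $\UU(\tilde C^n)$ not less than the corresponding convex combination of the $\UU(C^k)$'s, hence $\UU(\tilde C^n) \to \mathbf{u}(x)$; and the uniform asymptotic-elasticity bound from Assumption~\ref{asp:1} (used as in the proof of Lemma~\ref{lem:6}) provides the domination needed to secure $\UU(\hat C) = \lim_n \UU(\tilde C^n) = \mathbf{u}(x)$.

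For the identification, I integrate the Fenchel inequality $U(\hat C) \leq V(\delta^y) + \hat C\delta^y$ against $\P \otimes \mu$, using $\delta^y \in \bdot{\cD}(D^y)$ together with $\hat C \in \cC(x)$ and $D^y \in \cD(y)$:
\begin{equation*}
  \UU(\hat C) \leq \mathbf{v}(y) + \E\int_0^\infty \hat C\,\delta^y\,d\mu \leq \mathbf{v}(y) + \E\abr{\hat C,D^y} \leq \mathbf{v}(y) + xy = \mathbf{u}(x) = \UU(\hat C),
\end{equation*}
where the last equality uses the conjugacy $\mathbf{u}(x) = \mathbf{v}(y) + xy$ from Lemma~\ref{lem:5} (since $y = \mathbf{u}'(x)$). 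Equality throughout forces the pointwise Fenchel equality $U(\hat C) = V(\delta^y) + \hat C\delta^y$ to hold $\P \otimes \mu$-a.e., which in turn pins down $\hat C = -V'(\delta^y) = C^x$ off a $\P \otimes \mu$-null set; left-continuity of both processes combined with the full-support property of $\mu$ upgrades this to indistinguishability. Hence $C^x \in \cC(x)$ and $\UU(C^x) = \mathbf{u}(x)$. Uniqueness is immediate: any other maximizer $\bar C \in \cC(x)$ is subject to the same chain of equalities and is thereby identified with $C^x$.

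The main obstacle is the existence step: the set $\cC(x)$ is not uniformly bounded in the supremum and the utility functional $\UU$ is not obviously upper-semicontinuous under pointwise convergence, so the compactness of maximizing sequences must be extracted from the convex-compactness results of the appendix, and the needed upper-semicontinuity of $\UU$ rests essentially on the asymptotic-elasticity estimate of Assumption~\ref{asp:1}; once the maximizer is in hand, the identification with $C^x$ reduces to a clean application of strict concavity via Fenchel equality.
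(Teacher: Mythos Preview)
Your identification step is correct and is essentially what the paper also uses once feasibility is in hand: the Fenchel chain
\[
\UU(\hat C)\le \mathbf v(y)+\E\!\int_0^\infty\!\hat C\,\delta^y\,d\mu\le \mathbf v(y)+\E\abr{\hat C,D^y}\le \mathbf v(y)+xy=\mathbf u(x)
\]
with equality forcing $\hat C=-V'(\delta^y)=C^x$ is clean.

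The gap is in your existence step. You assert that ``the uniform asymptotic-elasticity bound \dots\ provides the domination needed to secure $\UU(\hat C)=\lim_n\UU(\tilde C^n)$'', but this is exactly the point where a direct primal compactness argument breaks down. From $\tilde C^n\to\hat C$ $\P\otimes\mu$-a.e.\ and $U\ge0$, Fatou yields only $\UU(\hat C)\le\liminf_n\UU(\tilde C^n)=\mathbf u(x)$, the wrong inequality; to get $\UU(\hat C)\ge\mathbf u(x)$ you would need uniform integrability of $(U(\tilde C^n))$. The Fenchel bound $U(\tilde C^n)\le V(\delta^y)+\tilde C^n\delta^y$ gives only $L^1$-boundedness of the majorant (since $\E\!\int \tilde C^n\delta^y\,d\mu\le xy$), not a fixed dominating function, and the estimate used in Lemma~\ref{lem:6} ($U(C)\le x^\gamma U(C/x)$ on $\{C>C^\gamma\}$) controls the \emph{growth of} $\mathbf u(x)$ in $x$, not the tails of $U(\tilde C^n)$ for fixed $x$. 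Nor does Lemma~\ref{lem:12} help: convex compactness of $\cC(x)$ would require $\{C_\infty:C\in\cC(x)\}$ to be bounded in probability, which is not assumed here (only $\cD(1)\neq\{\mathbf 0\}$).

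The paper therefore avoids any direct primal compactness and instead proves the hard statement $C^x\in\cC(x)$ by a \emph{dual} perturbation. For an arbitrary $D\in\cD(y)$ it sets $D^\epsilon\set\epsilon D+(1-\epsilon)D^y\in\cD(y)$, uses asymptotic elasticity to show $\VV(D^\epsilon)<\infty$, takes the Theorem~\ref{thm:1}\,(3) optimizer $C^\epsilon$ for $\VV(D^\epsilon)$, and from the minimality $\VV(D^y)\le\VV(D^\epsilon)$ extracts the first-order inequality $\E\abr{C^\epsilon,D}\le\E\abr{C^\epsilon,D^y}\le\tfrac1{1-\epsilon}\E\abr{C^\epsilon,D^\epsilon}$. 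Letting $\epsilon\downarrow0$, using $C^\epsilon\to C^x$ in $\dist$ (via strict convexity of $V$) together with lower-semicontinuity of the pairing and the $L^1$-convergence $\E\abr{C^\epsilon,D^\epsilon}\to\E\abr{C^x,D^y}=-\mathbf v'(y)y=xy$ from Lemma~\ref{lem:8}, one concludes $\E\abr{C^x,D}\le xy$. This is the missing idea in your argument; once it is in place, your Fenchel-equality identification and uniqueness go through verbatim.
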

\begin{proof}
  Uniqueness of a maximizer $C^x$ for $\mathbf{u}(x)$ is immediate
  from the strict concavity of $U$. Define $D^y \set \int_.^\infty
  \delta^y \,d\mu \in \cD(y)$ and observe that due to~\eqref{eq:15} in
  Theorem~\ref{thm:1} we have for any $C \in \cC(x)$:
  \begin{displaymath}
    \UU(C) \leq \VV(D^y)+\E\abr{C,D^y} \leq \VV(D^y)+xy.
  \end{displaymath}
  By item~3. of Theorem~\ref{thm:1} and by~\eqref{eq:47}, we have
  equalities in both of these estimates when we consider
  $C \set C^x=-V'(\delta^y)$. It thus suffices to prove that $C^x \in
  \cC(x)$, i.e., that 
  \begin{equation}
    \label{eq:49}
    \E \abr{C^x,D} \leq xy \text{ for any } D \in \cD(y).
  \end{equation}

  For this, we first note that, for any such $D$, we have
  \begin{equation}
    \label{eq:50}
    D^\epsilon \set
  \epsilon D + (1-\epsilon)D^y \in \cD(y) \text{ with }
  \VV(D^\epsilon)<\infty, \quad 0<\epsilon<1.
  \end{equation}
  Indeed, by monotonicity of $\VV$,
  \begin{displaymath}
  \VV(D^\epsilon) \leq
  \VV((1-\epsilon)D^y) \leq \E \int_0^\infty
  V((1-\epsilon)\delta^y)\,d\mu
  \end{displaymath}
  so that for~\eqref{eq:50} it suffices to argue that
  $V((1-\epsilon)\delta^y)$ is $\P \otimes \mu$-integrable. To this
  end, we use the asymptotic elasticity condition~\eqref{eq:8}
  which, in conjunction with the monotonicity of $V$, gives
  \begin{align*}
   V((1-\epsilon)\delta^y) &\leq V((1-\epsilon)(\delta^y \wedge
   D^\gamma)) \\&\leq (1-\epsilon)^{\frac{\gamma}{1-\gamma}} V(\delta^y
   \wedge D^\gamma) = (1-\epsilon)^{\frac{\gamma}{1-\gamma}}
   V(\delta^y) \vee V(D^\gamma).
  \end{align*}
  Hence, the integrability claim of~\eqref{eq:50} follows since
  $V(\delta^y) \in \mathbf{L}^1(\P\otimes\mu)$ by choice of $\delta^y$
  and $V(D^\gamma) = U(C^\gamma)-C^\gamma U'(C^\gamma) \in
  \mathbf{L}^1(\P\otimes\mu)$ by assumption on~$C^\gamma$.

  Because of~\eqref{eq:50}, we can apply Theorem~\ref{thm:1} part
  3. to deduce that there is a unique $C^\epsilon \in \cC$ such that
  $\VV(D^\epsilon)=\UU(C^\epsilon)-\E\abr{C^\epsilon,D^\epsilon}$. Moreover,
  \eqref{eq:16} of Theorem~\ref{thm:1} gives $\VV(D^y) \geq
  \UU(C^\epsilon)-\E\abr{C^\epsilon,D^y}$. Recalling the minimality of
  $\VV(D^y)$ we thus obtain
 \begin{displaymath}
   0 \leq \VV(D^\epsilon)-\VV(D^y) \leq
   \E\abr{C^\epsilon,D^y-D^\epsilon}=\epsilon \E\abr{C^\epsilon,D^y-D}.
 \end{displaymath}
 Therefore,
 \begin{equation}
   \label{eq:51}
   0 \leq \E\abr{C^\epsilon,D} \leq \E\abr{C^\epsilon,D^y} \leq \frac{1}{1-\epsilon}\E\abr{C^\epsilon,D^\epsilon},
 \end{equation}
 where the last estimate is immediate from $D^y \leq
 D^\epsilon/(1-\epsilon)$.  Hence, \eqref{eq:49} will follow from
 letting $\epsilon \downarrow 0$ in~\eqref{eq:51} once we have
 established that
  \begin{equation}
    \label{eq:52}
    \E\abr{C^x,D} \leq \liminf_{\epsilon \downarrow 0} \E\abr{C^\epsilon,D}
  \end{equation}
  and
  \begin{equation}
    \label{eq:53}
    \lim_{\epsilon \downarrow 0} \E\abr{C^\epsilon,D^\epsilon} =xy.
  \end{equation}
  To obtain this it suffices to consider a sequence $\epsilon_n
  \downarrow 0$ and prove
  \begin{equation}
    \label{eq:54}
     \dist(C^{\epsilon_n},C^x) \to 0 \text{ as } {n\uparrow\infty} 
  \end{equation}
   for the distance $\dist$ of~\eqref{eq:2} and
  \begin{equation}
    \label{eq:55}
     C^{\epsilon_n} U'(C^{\epsilon_n}) \longto_{n \uparrow \infty} C^x
     U'(C^x)=C^x\delta^y \text{ in $\mathbf{L}^1(\P\otimes\mu)$}.
  \end{equation}
  Indeed, the lower semi-continuity of the bracket $\E\abr{.,D}$ with
  respect to convergence in $\dist$ (Lemma~\ref{lem:11}) then
  yields~\eqref{eq:52}. Similarly~\eqref{eq:53} follows
  because~\eqref{eq:55} yields
  \begin{displaymath}
    \E \abr{C^x,D^y} = \E \int_0^\infty C^x\delta^y\,d\mu =
    \lim_{n} \E\int_0^\infty
    C^{\epsilon_n}U'(C^{\epsilon_n})\,d\mu = \lim_{n} \E\abr{C^{\epsilon_n},D^{\epsilon_n}} 
  \end{displaymath}
  and because~\eqref{eq:47} yields that $\E\abr{C^x,D^y}=xy$ by choice of $x$
  and $y$.

 For~\eqref{eq:54} we will in fact prove that $\delta^{\epsilon_n}
 \set U'(C^{\epsilon_n}) \to \delta^y=U'(C^x)$ in
 $\mathbf{L}^0(\P\otimes\mu)$. 
 If this convergence fails there is $\epsilon>0$ such that
  \begin{displaymath}
    \limsup_n \P\otimes\mu\left[|\delta^{\epsilon_n}-\delta^y|>\epsilon\right]>\epsilon.
  \end{displaymath}
  Observe now that by strict convexity of $V$,  $\delta^n \set
  \frac{1}{2}(\delta^{\epsilon_n}+\delta^y) \in \bdot{\cD}(y)$ satisfies 
  \begin{displaymath}
  V(\delta^n) \leq \frac{1}{2}(V(\delta^{\epsilon_n})+V(\delta^y))  
  \end{displaymath}
  and, for some sufficiently small $\eta>0$, also
  \begin{displaymath}
    \limsup_n \P\otimes\mu\left[V(\delta^n) \leq
      \frac{1}{2}(V(\delta^{\epsilon_n})+V(\delta^y)) -\eta\right]>\eta.
  \end{displaymath}
 Upon integration with respect to $\P\otimes\mu$ we obtain the contradiction
  \begin{align*}
   \mathbf{v}(y) &\leq \limsup_n \E\int_0^\infty V(\delta^n)\,d\mu 
   \\&\leq \limsup_n \frac{1}{2}\E\int_0^\infty
   \left(V(\delta^{\epsilon_n})+V(\delta^y)\right)\,d\mu-\eta^2\\
  &=\limsup_n \frac{1}{2}(\VV(\epsilon_n
  D+(1-\epsilon_n)D^y)+\VV(D^y))-\eta^2\\
& \leq \VV(D^y)-\eta^2<\mathbf{v}(y)
  \end{align*}
  where the last but one estimate is due to the upper-semicontinuity
  of the convex function $[0,1] \ni \epsilon \mapsto \VV(\epsilon
  D+(1-\epsilon)D^y)$ at the boundary point 0. Hence, we must have
  indeed that $\delta^{y_n} \to \delta^y$ in
  $\mathbf{L}^0(\P\otimes\mu)$.

  In light of~\eqref{eq:54}, \eqref{eq:55} will follow once we have
  established the uniform integrability of $(C^{\epsilon_n}
  U'(C^{\epsilon_n}))_{n=1,2,\dots}$. On $\cbr{C^{\epsilon_n} \leq
    C^\gamma}$, we have $C^{\epsilon_n} U'(C^{\epsilon_n}) \leq
  U(C^{\epsilon_n}) \leq U(C^\gamma) \in \mathbf{L}^1(\P \otimes \mu)$
  by assumption on $C^\gamma$. On $\cbr{C^{\epsilon_n} > C^\gamma} =
  \cbr{\delta^{\epsilon_n} <D^\gamma}$, we have $C^{\epsilon_n}
  U'(C^{\epsilon_n}) =-\delta^{\epsilon_n} V'(\delta^{\epsilon_n})
  \leq \frac{\gamma}{1-\gamma} V(\delta^{\epsilon_n})$ by our
  asymptotic elasticity assumption. So it
  suffices to prove the $\mathbf{L}^1(\P \otimes \mu)$-convergence of
  $(V(\delta^{\epsilon_n}))_{n=1,2,\dots}$. Because this sequence is
  convergent in $\mathbf{L}^0(\P \otimes \mu)$ and nonnegative, this
  amounts to showing that $\lim_n \E\int_0^\infty
  V(\delta^{\epsilon_n})\,d\mu = \E\int_0^\infty
  V(\delta^y)\,d\mu$. By Fatou's lemma, we have ``$\geq$" for 
  $\liminf_n$. Recalling that $\E\int_0^\infty
  V(\delta^{\epsilon_n})\,d\mu = \VV(\epsilon_n D+(1-\epsilon_n)D^y)$,
 we deduce ``$\leq$'' for the $\limsup_n$ from the
 upper-semicontinuity of the convex function $[0,1] \ni \epsilon \mapsto \VV(\epsilon
  D+(1-\epsilon)D^y)$ at the boundary point 0. 
\end{proof}

 We now can finally give the 

 \noindent\textbf{Proof of Theorem~\ref{thm:2}.}
 For item 1. we note that $\mathbf{u}$ and $\mathbf{v}$ are
 real-valued by Lemmas~\ref{lem:5} and~\ref{lem:6},
 respectively. Their duality is established in Lemma~\ref{lem:5} and
 their differentiability is contained in Lemmas~\ref{lem:6}
 and~\ref{lem:8}, respectively. The Inada conditions~\eqref{eq:23} can
 be collected from Lemmas~\ref{lem:6} and~\ref{lem:8}. The conjugacy
 relations between optimizers for $\mathbf{u}$ and $\mathbf{v}$ follow
 from the duality of $\mathbf{u}$ and $\mathbf{v}$. Strict concavity
 of $\mathbf{u}$ is similarly a consequence of the differentiability
 of $\mathbf{v}$; see Theorem~26.3 in \citet{Rock:70}.

  Item 3. is just a dual formulation of item 2. For $y>0$,
  Lemma~\ref{lem:4} yields $\delta^y \in \bdot{\cD}(y)$ with
  $\mathbf{v}(y)=\E \int_0^\infty V(\delta^y) \,d\mu$. This readily
  implies that $\breve{D}^y \set \int_.^\infty \delta^y \,d\mu$ is contained
  in $\cD(y)$ and attains the infimum
  in~\eqref{eq:20}. Lemma~\ref{lem:9} shows that $C^x \set
  -V'(\delta^y)$ attains $\mathbf{u}(x)=\UU(C^x)$. Let now $\tilde{D}
  \in \cD(y)$ also attain $\mathbf{v}(y)=\VV(\tilde{D})$. We then have
  \begin{align*}
    \UU(C^x)=\mathbf{u}(x)=\mathbf{v}(y)+xy
    \geq\VV(\tilde{D})+\E\abr{C^x,\tilde{D}},
  \end{align*}
  i.e. $\tilde{D}$ attains the infimum~\eqref{eq:15} for $\hat{C} \set
  C^x$. It thus follows by item 2. of Theorem~\ref{thm:1} that
  $\tilde{D}$ has an envelope process whose density coincides with
  $-U'(C^x)=\delta^y$. Hence, the envelope process of all the
  minimizers of~\eqref{eq:20} is the same process $\breve{D}^y$. This
  accomplishes our proof.  \qed




\appendix

\section{Some stochastic envelope processes}
\label{sec:convex-envelopes}

The existence of envelope processes $\breve{D}$ with~\eqref{eq:12}
and~\eqref{eq:13} for $D \in \cD$ is key for our approach. We show
below how to obtain such an envelope from a result
in~\citet{BankElKaroui04}. Uniqueness is established by an optimal
stopping argument which we adopt from~\citet{BankFoellmer03}.

\begin{Lemma}
  \label{lem:10}
  Under Assumption~\ref{asp:1}, any $D \in \cD$ has a unique (up to
  indistinguishability) envelope process $\breve{D}$ of the form
  \begin{equation}
    \label{eq:56}
    \breve{D}_t = \int_t^\infty U'(C^{\breve{D}})\,d\mu, \; t \geq 0, \text{ for some }
    C^{\breve{D}} \in \cC
 \end{equation}
 such that $\P$-a.s.
 \begin{equation}
   \label{eq:57}
   \opt{\breve{D}}_t \leq \opt{D}_t \text{ for any $t \geq 0$, with ``$=$" if } dC^{\breve{D}}_t>0.
 \end{equation}
\end{Lemma}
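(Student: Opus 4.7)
My plan is to obtain $\breve{D}$ by applying the stochastic representation theorem of \citet{BankElKaroui04} to the optional projection $\opt{D}$ with nonlinearity $f(\omega,t,\ell) = U_t'(\omega,\ell)$, and then to establish uniqueness either via an optimal stopping argument in the spirit of \citet{BankFoellmer03} or directly from the uniqueness clause in Bank--El Karoui.

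For existence, I would first extend $U'_t(\omega,\cdot)$ to a continuous, strictly decreasing bijection of $\RR$ onto $\RR$; the extension to $\ell<0$ is immaterial since the resulting process will turn out to take values in $[0,\infty]$. Under Assumption~\ref{asp:1} -- continuity and Inada behaviour of $U'_t$, together with the fact that $\mu$ is optional, atomless and of finite expected total mass -- this matches the hypotheses of \citet{BankElKaroui04}, after a suitable truncation or localization that circumvents the fact that $D_0$ need not be integrable so that $\opt{D}$ is not in general of class~(D); concretely, one applies the representation to the bounded processes $\opt{D\wedge n}$ and passes to the monotone limit in $n$. This yields an optional process $L$, unique up to indistinguishability, such that, for every stopping time $\tau$,
\begin{equation*}
\opt{D}_\tau = \condexp[\cF_\tau]{\int_\tau^\infty U'_t\left(\sup_{\tau\leq s<t} L_s\right) d\mu_t}.
\end{equation*}
Setting $C^{\breve{D}}_t \set 0\vee\sup_{s<t} L_s$ (with $\sup_\emptyset \set 0$) then yields a nonnegative, left-continuous, nondecreasing, adapted -- hence predictable -- process starting from $0$, so $C^{\breve{D}}\in\cC$, and $\breve{D}_t \set \int_t^\infty U'(C^{\breve{D}})\,d\mu$ lies in $\cD$. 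Since $\sup_{\tau\leq s<t} L_s \leq C^{\breve{D}}_t$ and $U'$ is decreasing, taking conditional expectations at an arbitrary stopping time $\tau$ gives $\opt{\breve{D}}_\tau \leq \opt{D}_\tau$, and the pathwise inequality $\opt{\breve{D}}\leq\opt{D}$ then follows by optional section. Equality at points of increase of $C^{\breve{D}}$ comes from the observation that at such a time $t$ the supremum $\sup_{s<u} L_s$ for $u$ slightly above $t$ is already attained in $[t,u)$, so it coincides with $\sup_{\tau\leq s<u} L_s$ for any $\tau\leq t$, yielding equality in the two representations on the predictable set $\{dC^{\breve{D}}>0\}$.

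For uniqueness, suppose $C^1, C^2\in\cC$ both produce envelopes $\breve{D}^1, \breve{D}^2$ satisfying~\eqref{eq:56}--\eqref{eq:57}. On the predictable set $A\set\{C^1>C^2\}$, strict monotonicity of $U'$ forces $U'(C^1)<U'(C^2)$, and integrating forward in time yields $\breve{D}^1_t<\breve{D}^2_t$ at every $t$ such that $A\cap(t,\infty)$ carries positive $\mu$-mass. At any point of increase of $C^1$ contained in the interior of $A$, \eqref{eq:57} then gives $\opt{\breve{D}^1}_t=\opt{D}_t\geq\opt{\breve{D}^2}_t$, contradicting $\breve{D}^1<\breve{D}^2$; a symmetric argument applied to $\{C^2>C^1\}$ together with the common starting value $C^1_0=C^2_0=0$ forces $C^1=C^2$ up to indistinguishability. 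Alternatively, this uniqueness can be read off from the Bank--El Karoui theorem itself, since any admissible envelope determines a process $L$ satisfying the above representation, and $L$ is unique up to indistinguishability, hence so is $C^{\breve{D}}=0\vee\sup_{s<\cdot}L_s$.

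The main technical obstacle is the existence step in the absence of class~(D) integrability for $\opt{D}$ combined with the Inada singularity $U'(0+)=\infty$: the representing process $L$ must be obtained as a limit of $L^n$'s built from the truncations $D\wedge n$, and this limit has to be compatible with the running-supremum operation so that the envelope inequality~\eqref{eq:57} survives the limit and the equality clause on $\{dC^{\breve{D}}>0\}$ is not lost. The equality on the set of increase points, as opposed to the flat pieces of $C^{\breve{D}}$, is the second delicate point, requiring care with optional sections and with the interchange of suprema and limits; here the interpretation of $L_t$ as the solution of an auxiliary optimal stopping problem provided by Bank--El Karoui is precisely the tool that makes these statements rigorous.
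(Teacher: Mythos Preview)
Your existence sketch follows the same route as the paper: invoke the Bank--El~Karoui representation theorem for $\opt{D}$, localize to handle the lack of class~(D), and recover $C^{\breve{D}}$ as a running supremum. Two minor differences are worth noting. First, the paper parameterizes via $f_t(l)=U'_t(-1/l)$ for $l<0$ (and $f_t(l)=-l$ for $l\ge 0$), so that $C^{\breve{D}}=-1/\sup_{[0,\cdot)} L$; this is merely a change of variable. Second, and more usefully, the paper localizes in time via $S^n=\inf\{t:D_t\le n\}$ and applies Bank--El~Karoui to $X^n=\opt{(D_{\cdot\vee S^n})}$ rather than to $\opt{(D\wedge n)}$. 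Since each $X^n$ agrees with $\opt{D}$ on $(S^n,\infty)$, the resulting $L^n$ and $L^{n+1}$ automatically coincide there, so the global $L$ is obtained by patching with no limit argument whatsoever. Your truncation $D\wedge n$ can be made to work for the same reason, but you would have to argue explicitly why the $L^n$'s agree on $\{D\le n\}$.

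Your uniqueness argument, however, has a genuine gap. You claim that on $A=\{C^1>C^2\}$ one has $U'(C^1)<U'(C^2)$ and that ``integrating forward in time yields $\breve{D}^1_t<\breve{D}^2_t$''. But
\[
\breve{D}^1_t-\breve{D}^2_t=\int_t^\infty\bigl\{U'(C^1)-U'(C^2)\bigr\}\,d\mu
\]
picks up contributions from all of $(t,\infty)$, not only from $A$; on $\{C^1<C^2\}$ the integrand has the opposite sign, and two nondecreasing left-continuous processes can cross several times, so no sign conclusion for $\breve{D}^1-\breve{D}^2$ follows. Your alternative of reading uniqueness off Bank--El~Karoui does not work directly either: that theorem asserts uniqueness of $L$ satisfying the full representation $\opt{D}_\tau=\condexp[\cF_\tau]{\int_\tau^\infty U'(\sup_{[\tau,t)}L)\,d\mu}$ for \emph{every} stopping time~$\tau$, whereas the envelope conditions~\eqref{eq:56}--\eqref{eq:57} only fix $\opt{D}$ on $\{dC^{\breve{D}}>0\}$ and merely bound it elsewhere; an arbitrary envelope does not, a priori, furnish such an $L$.

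The paper's remedy is precisely the optimal stopping argument you mention in your plan but do not carry out. For each level $l>0$ one shows that the passage time $\tilde{S}^l=\inf\{t:\tilde{C}_t>l\}$ of any process $\tilde{C}$ satisfying~\eqref{eq:56}--\eqref{eq:57} is the \emph{largest} minimizer of
\[
S\longmapsto \E\Bigl[D_S-\int_S^\infty U'(l)\,d\mu\Bigr]
\]
over all stopping times: the inequality $\opt{D}\ge\opt{\tilde{D}}$ gives a lower bound, equality at $S=\tilde{S}^l$ holds because $\tilde{S}^l$ is a point of increase of $\tilde{C}$, and strict monotonicity of $U'$ together with full support of $\mu$ makes any $S>\tilde{S}^l$ strictly suboptimal. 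Since this stopping problem depends only on $D$ and $l$, the level passage times---and hence $\tilde{C}$ itself---are uniquely determined.
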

\begin{proof}
  For existence we will employ Theorem~2 of \cite{BankElKaroui04},
  which, however, we cannot directly apply with $X \set \opt{D}$
  because $\opt{D}$ may not be of class (D) . So let $S^n \set
  \inf\cbr{t \geq 0 \;:\; D_t \leq n}$ and put $X^n \set
  \opt{D_{. \vee S^n}}$ for $n=1,2,\dots$. Then, because $D$ is
  right-continuous and non-increasing, $X^n$ is even bounded
  and clearly lower-semicontinuous in expectation with
  $X^n_\infty=0$. Moreover, let
\begin{equation*}
  f_t(\omega,l) \set 
  \begin{cases}
    U'_t(\omega,-1/l), & l<0,\\
   -l, & l \geq 0.
  \end{cases}
\end{equation*}
Then, by the properties of $U$:
\begin{itemize}
\item $l \mapsto f_t(\omega ,l)$ is a continuous function, strictly
  decreasing from $+\infty$ to $-\infty$ in $l \in
  (-\infty,\infty)$ for any $(\omega,t) \in \Omega \times [0,\infty)$,
  and
\item $(\omega,t) \mapsto f_t(\omega,l)$ is a predictable $\P \otimes
  \mu$-integrable process on $\Omega \times [0,\infty)$ for any $l \in
  (-\infty,\infty)$.
\end{itemize}
So, by Theorem~2 of \cite{BankElKaroui04} and their Remark~2.1, there exists an optional
process $L^n$ such that
\begin{equation*}
  X^n_S = \condexp[\cF_S]{\int_S^\infty f_t(\sup_{v \in [S,t)} L^n_v) \,d\mu_t}
\end{equation*}
for any stopping time $S \geq 0$. Clearly, we may assume that
$L^n=L^{n+1}$ on $(S^n,\infty)$. So 
\begin{equation*}
L_t \set 
\begin{cases}
L^n_t, & t \in (S^n,\infty), \; n=1,2,\dots,\\
-\infty, & t \in [0,S^\infty],
\end{cases}
\end{equation*}
where $S^\infty = \inf_n S^n =\inf\cbr{t \geq 0 \;:\;
  \opt{D}_t<\infty}$,  consistently defines an
optional process $L$ such that
\begin{equation*}
  \opt{D}_S = \condexp[\cF_S]{\int_S^\infty f_t(\sup_{v \in [S,t)}
    L_v) \,d\mu_t}
\end{equation*}
for any stopping time $S \geq 0$. 

Let us next argue that $L \leq 0$ up to
indistinguishability. Otherwise there exists, by Meyer's optional
section theorem, a stopping time $S$ such that $L_S>0$ on
$\cbr{S<\infty}$ where the latter set has positive probability. But
then we obtain, by definition of $f$,
\begin{equation*}
  0 \leq \opt{D}_S = \condexp[\cF_S]{\int_S^\infty f_t(\sup_{v \in [S,t)}
    L_v) \,d\mu_t}  \leq -L_S \condexp[\cF_S]{\mu([S,\infty))}<0
\end{equation*}
on $\cbr{S<\infty}$, a contradiction.

It follows that 
\begin{equation*}
  C^{\breve{D}}_t \set
 \begin{cases}
  0, & t \in [0,S^\infty],\\
 -1/\sup_{s \in [0,t)} L_s, & t \in (S^\infty,\infty],
 \end{cases}
\end{equation*}
and $\breve{D} \set \int_.^\infty U'(C^{\breve{D}}) \,d\mu$ yield
processes contained in $\cC$ and $\cD$, respectively, with the desired
properties~\eqref{eq:56} and~\eqref{eq:57}.

Let us now prove uniqueness of such a $\breve{D}$ and take an
arbitrary $\tilde{C} \in \cC$ such that $\tilde{D}=\int_.^\infty
U'(\tilde{C})\,d\mu \in \cD$ satisfies $\opt{\tilde{D}} \leq \opt{D}$,
with ``$=$'' on $\cbr{d\tilde{C}>0}$. We will show that, for any
$l>0$, $\tilde{S}^l \set \inf\cbr{t \geq 0 \;:\; \tilde{C}_t>l}$ is
the largest stopping time minimizing $\E\left[D_S - \int_S^\infty
  U'(l) \,d\mu\right]$ over all stopping times $S \geq 0$. As a
result, the level passage times for $\tilde{C}$ are uniquely
determined and, thus, have to coincide with those of $C^{\breve{D}}$,
proving that $\tilde{C}=C^{\breve{D}}$, i.e., $\tilde{D}=\breve{D}$ up
to indistinguishability.

For our optimal stopping claim, we first note that $0 \leq U'(l) \leq
U(l)/l \in \mathbf{L}^1(\P\otimes\mu)$ for $l>0$ and so the above
optimal stopping problem is well-defined. Now take a stopping time $S
\geq 0$ and observe that
\begin{align*}
  \E\left[D_S - \int_S^\infty U'(l) \,d\mu\right] 
& \geq  \E\left[\int_S^\infty\cbr{U'(\tilde{C})- U'(l)} \,d\mu\right] \\
& \geq  \E\left[\int_{\tilde{S}^l}^\infty\cbr{U'(\tilde{C})- U'(l)} \,d\mu\right] 
\end{align*}
where the first inequality is due to $\opt{D} \geq \opt{\tilde{D}}$
and the second follows by definition of $\tilde{S}^l$ and monotonicity of
$c\mapsto U'(c)$. For $S=\tilde{S}^l$ the properties of $\tilde{C}$ ensure
that we have equality everywhere in the above estimates and so
$\tilde{S}^l$ solves our optimal stopping problem. Moreover, the
strict monotonicity of $c \mapsto U'(c)$ ensures that any stopping
time $S>\tilde{S}^l$ will yield a strict inequality in the last estimate
above and so $\tilde{S}^l$ is in fact the largest solution to the
stopping problem, as remained to be shown.
\end{proof}

\section{Convex compactness and a minimax theorem}
\label{sec:minimax}

In this section we first collect a few properties of subsets of $\cC$
related to the pairing~\eqref{eq:1}. In particular, we investigate the
induced notion of convex compactness. For the sake of completeness, we
also provide a version of the well-known minimax theorem which is
adapted to this generalized notion of compactness.

\begin{Lemma}\label{lem:11}
  The pairing $(C,D) \mapsto \E \abr{C,D}$ is lower-semicontinuous
  with respect to convergence in the metric $\dist$ of~\eqref{eq:2} in
  each of its factors.
\end{Lemma}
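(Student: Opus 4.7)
The plan is to establish the $\dist$-lower semicontinuity of $\E\abr{\cdot,\cdot}$ in each factor by combining a subsequence argument, a pathwise $\liminf$-bound exploiting monotonicity and one-sided continuity, and a twofold application of Fatou's lemma.

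If $A^n\to A$ in $\dist$ (with all $A^n$, $A$ in the same class $\cC$ or $\cD$), then $h(A^n)\to h(A)$ in $\mathbf{L}^1(\P\otimes\mu)$, and after extracting a subsequence one has $A^n\to A$ pointwise $\P\otimes\mu$-a.e., since $h$ is a homeomorphism. By Assumption~\ref{asp:1} $\mu$ is atomless with full support, so for $\P$-a.e.\ $\omega$ the set $S(\omega)=\{t\geq 0:A^n_t(\omega)\to A_t(\omega)\}$ is dense in $[0,\infty)$ and cofinal at $\infty$. The main step will be to upgrade this dense convergence to the pointwise bound $\liminf_n A^n_t(\omega)\geq A_t(\omega)$ at \emph{every} time point. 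For $C^n,C\in\cC$ (increasing, left-continuous), pick $s_k\in S(\omega)$ with $s_k\uparrow t$; monotonicity gives $\liminf_n C^n_t\geq\lim_n C^n_{s_k}=C_{s_k}$, and left-continuity yields $C_{s_k}\to C_t$ as $k\to\infty$. Symmetrically, for $D^n,D\in\cD$ (decreasing, right-continuous), take $s_k\in S(\omega)$ with $s_k\downarrow t$ and use right-continuity to obtain $\liminf_n D^n_t\geq D_t$.

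With these pathwise bounds, lower semicontinuity follows by double Fatou. Fixing $D\in\cD$ and rewriting
\begin{equation*}
  \E\abr{C,D}=\E\int_{(0,\infty]} C_t\,(-dD_t)
\end{equation*}
turns the integrator into a positive random measure that does not depend on $n$; Fatou applied pathwise against $-dD(\omega)$ and then under $\E$ gives $\liminf_n\E\abr{C^n,D}\geq\E\abr{C,D}$. Fixing $C\in\cC$ and using instead $\E\abr{C,D}=\E\int_{[0,\infty)} D_t\,dC_t$ handles the other factor. A standard extraction trick (along any subsequence minimizing the $\liminf$, extract a further $\P\otimes\mu$-a.e.\ convergent sub-subsequence) then lifts the inequality from the sub-subsequence to the original sequence.

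The main subtlety I expect concerns the endpoints: the conventions of Section~\ref{sec:controls} assign $dC$ a point mass of weight $C_{0+}$ at $0$ and $dD$ a point mass of weight $D_{\infty-}$ at $\infty$, so the pathwise $\liminf$-bound must also be verified at $t=\infty$ for $C^n$ (achieved by $s_k\uparrow\infty$ in $S(\omega)$, hence the need for cofinality) and at $t=0$ for $D^n$ (achieved by $s_k\downarrow 0$, hence the need for density near the origin). Everything else amounts to routine applications of monotonicity, one-sided continuity, and Fatou.
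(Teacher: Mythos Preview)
Your argument is correct and in fact a bit cleaner than the paper's. The paper also starts from Fatou and the representation $\E\abr{C,D}=\E\int_{(0,\infty]} C_t\,|dD_t|$, but it only argues that $\lim_n C^n_t=C_t$ at continuity points of $C$. Since the jump set of $C$ is countable, this is $|dD|$-almost all $t$ only when $D$ is \emph{continuous}; the paper therefore has to approximate a general $D\in\cD$ from below by continuous $D^m\nearrow D$ and finish with a monotone convergence step in $m$. Your observation that monotonicity of $C^n$ together with left-continuity of $C$ already yields the one-sided bound $\liminf_n C^n_t\geq C_t$ at \emph{every} $t$ (and symmetrically for $D$) lets you apply Fatou directly against the original $-dD$ without any approximation. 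What the paper's route buys is a slightly sharper intermediate statement (actual convergence at continuity points rather than a $\liminf$-bound), at the price of the extra reduction; your route buys a shorter, approximation-free proof. Your handling of the endpoint masses at $0$ and $\infty$ via density and cofinality of $S(\omega)$ is exactly the point that has to be checked, and it is fine because $\mu$ has full support on $[0,\infty)$.
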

\begin{proof}
The argument for lower-semicontinuity with respect to $D$ being
similar, let us show lower-semicontinuity with respect to $C
\in \cC$ for fixed $D \in \cD$.

By Fatou's lemma we have
\begin{displaymath}
  \liminf_n \E\abr{C^n,D} = \liminf_n \E \int_{(0,\infty]} C^n \,|dD|
  \geq \E \int_{(0,\infty]}\liminf_n C^n \,|dD|.
\end{displaymath}
Now $\dist(C^n,C)\to0$ implies $\lim_n C^n = C$ on $\cbr{\Delta C=0}$
whose countable complement is a $|dD|$-null set $\P$-almost surely if
$D$ is continuous.

An arbitrary $D \in \cD$ is right-continuous and non-increasing.  We
thus can find continuous, real-valued $D^m \in \cD$ with $D^m \nearrow
D$ pointwise as $m \uparrow \infty$. So, since our claim holds for these
continuous $D^m$, we can conclude
\begin{displaymath}
  \liminf_n \E\abr{C^n,D} \geq \liminf_n \E\abr{C^n,D^m} \geq \E
  \abr{C,D^m} = \E \int_{[0,\infty)} D^m \,dC
\end{displaymath}
for $m=1,2,\dots$.  The claim for $D$ then follows by monotone
integration as we let $m \uparrow \infty$ in the last term of the
above inequality.
\end{proof}

Recall from~\citet{Zitkovic:10}, Definition~2.1, that a subset of a
topological vector space is convexly compact if it satisfies the
finite intersection property for closed and convex subsets.
Equivalently, a closed and convex subset of a topological vector space
is convexly compact if and only if for every net in this set there exists a
convergent subnet of convex combinations (cf. Proposition~2.4 in
\cite{Zitkovic:10}).

We use convex compact sets in the Minimax Theorem~\ref{thm:3} below.
The connection with our duality framework of Lemma~\ref{lem:5} is made
possible by the following result.

\begin{Lemma}\label{lem:12} 
  Let $\cA$ be a convex subset of the consumption space $\cC$ that is
  closed in the topology generated by the metric $\dist$
  of~\eqref{eq:2}.  Then $\cA$ is convexly compact if and only if the
  set of random variables $\cbr{C_\infty \;:\: C \in \cE}$ is bounded
  in probability.

  In particular, for any $c \in [0,\infty)$, $\cbr{C \in \cC \;:\:
    C_\infty \leq c}$ is a convexly compact subset of the space of
  left-continuous processes with bounded total variation endowed with
  the metric $\dist$.
\end{Lemma}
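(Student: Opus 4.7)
The plan is to handle the two directions separately, with the bulk of the work going into the \emph{if} direction, which is also what is actually invoked in the proof of Lemma~\ref{lem:5}. The \emph{in particular} assertion will follow immediately once the \emph{if} direction is available.

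For the \emph{if} direction, I would start from a sequence $(C^n)_{n \geq 1}$ in $\cA$ and aim to construct convex combinations $\hat{C}^n \in \conv\cbr{C^k \;:\; k \geq n}$ that converge in $\dist$ to an element of $\cA$. Fix an enumeration $(q_k)_{k \geq 0}$ of $(\QQ \cap [0,\infty)) \cup \cbr{\infty}$. Iteratively apply Lemma~A1.1 of \citet{DelbSch:94}: at stage $k$, the sequence of values at $q_k$ of the current round of convex combinations is dominated by the corresponding $\hat C^n_\infty$, which inherits boundedness in probability from the assumption on $\cbr{C_\infty \;:\; C \in \cA}$ together with the convexity of this set. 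A diagonal extraction then yields $\hat{C}^n \in \conv\cbr{C^k \;:\; k \geq n}$ with $\hat{C}^n_{q_k}$ converging $\P$-a.s.\ for every $k$.

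Next, since each $\hat C^n$ is non-decreasing, the a.s.\ limits $\hat{C}^*_{q}$ are increasing in rational $q$. I would then define $\hat{C}^*_t \set \sup\cbr{\hat{C}^*_{q} \;:\; q \in \QQ, \; 0 \leq q < t}$ for $t > 0$, together with $\hat{C}^*_0 \set 0$. This produces a left-continuous, non-decreasing, $[0,\infty]$-valued process, which is $\cF_{t-}$-measurable at each $t$ and therefore predictable under the usual hypotheses, hence an element of $\cC$. By monotonicity of the $\hat C^n$, the rational pointwise convergence propagates to every continuity point of $\hat{C}^*$, and the set of its jump times is at most countable and thus $\mu$-negligible $\P$-a.s.\ thanks to the no-atoms assumption in Assumption~\ref{asp:1}. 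Since $h$ is bounded, dominated convergence delivers $\dist(\hat{C}^n, \hat{C}^*) \to 0$, and closedness plus convexity of $\cA$ give $\hat{C}^* \in \cA$. Convex compactness then follows via Proposition~2.4 of \citet{Zitkovic:10}.

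For the converse \emph{only if} direction, the plan is contrapositive. If $\cbr{C_\infty \;:\; C \in \cA}$ fails to be bounded in probability, adapt the $L^0_+$-argument of \citet{Zitkovic:10} to extract $C^n \in \cA$ and $\alpha>0$ with $\P[C^n_\infty > n] \geq \alpha$ such that no sequence of convex combinations of the $C^n$ produces an $L^0_+$-bounded sequence of terminal values. Combining this with the full-support property of $\mu$, which forces any $\dist$-limit to retain sufficient control on large time scales, shows that no subnet of convex combinations can $\dist$-converge to an element of $\cC$, contradicting convex compactness. The \emph{in particular} claim is then a direct application of the \emph{if} direction to $\cbr{C \in \cC \;:\; C_\infty \leq c}$, which is convex, $\dist$-closed (by Fatou along a $\dist$-convergent sequence), and obviously has $C_\infty$ bounded by the constant $c$.

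The main obstacle is the \emph{if} direction's passage from countable-rational a.s.\ convergence to full $\dist$-convergence: one has to identify the limit as a bona fide predictable process in $\cC$ \emph{and} discard the countable set of its jump times as $\mu$-null. It is precisely the atomless nature of $\mu$ in Assumption~\ref{asp:1}, combined with the monotonicity built into $\cC$, that makes these two steps work simultaneously.
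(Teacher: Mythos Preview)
Your rational-times-plus-diagonal construction for the \emph{if} direction is a clean alternative to the paper's suggestion of using a single strictly concave functional on the process space, and it is correct as far as it goes. The gap is in the final step: Proposition~2.4 of \citet{Zitkovic:10}, which you invoke, characterises convex compactness via \emph{nets}, not sequences---every net must admit a convergent subnet of generalised convex combinations in the sense of \citet[Definition~2.3]{Zitkovic:10}. Equivalently, one needs the finite intersection property for \emph{arbitrary} (not merely countable) families of closed convex subsets, and it is precisely this form that the Minimax Theorem~\ref{thm:3} consumes. Your diagonal argument is inherently sequential and does not deliver this; citing Proposition~2.4 does not close the gap but rather exposes it. The paper's proof explicitly flags this point (``with minor modifications the technique can be used for nets as well'') and defers the details to \citet{Kauppila:10}. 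The natural way to upgrade is to replace the pointwise-at-rationals argument by a single bounded, strictly concave, $\dist$-upper-semicontinuous functional such as $\Phi(C)=\E\int_0^\infty \phi(C_t)\,d\mu_t$ for a strictly concave bounded $\phi$, and then argue directly at the level of nets along the lines of \citet{Zitkovic:10}'s proof of his Theorem~3.1 for $L^0_+$.

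A smaller issue in your \emph{only if} sketch: you conclude that ``no subnet of convex combinations can $\dist$-converge to an element of $\cC$'', but $\cC$ admits $[0,\infty]$-valued processes, so escaping to $+\infty$ does not by itself prevent a $\dist$-limit in $\cC$ (e.g.\ $C^n=n\,\mathbf{1}$ converges in $\dist$ to the process that is identically $+\infty$ on $(0,\infty]$, which lies in $\cC$). The contradiction has to come from the limit leaving $\cA$, and for that the full-support property of $\mu$ alone is not the right lever; one needs to adapt the $L^0_+$ argument of \citet{Zitkovic:10} more carefully, as the paper indicates.
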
 
\begin{proof}
  The proof combines well-known techniques from \citet{Zitkovic:10}
  and \citet{DelbSch:94}.  The details of how to modify
  these techniques to our space of controls $\cC$ can be found in
  Theorem~3.3. in \citet{Kauppila:10}.

  The first step is to show that sets bounded in probability are
  convexly compact.  Lemma~A1.1 in \cite{DelbSch:94}
  illustrates how a (generic) strictly concave functional on the space
  of interest (in our case the space of consumption plans) can be used
  to establish convergence of a subsequence of convex combinations.
  With minor modifications the technique can be used for nets as well.

  The second part is to show that convexly compact sets are bounded in
  probability.  Theorem~3.1 in \citet{Zitkovic:10} proves that closed
  and convex subsets of $L_+^0$ are convexly compact if and only if
  the set is bounded in probability.  The ``only if''-part of this
  theorem can be adapted to show that convexly compact subsets of the
  consumption space are bounded in probability.
\end{proof}

We finish by noting a version of the common minimax theorem which uses
convex compactness and follows with appropriate modifications from the
basic outline of Theorem~3.1 in~\citet{Simons:98}:

\begin{Theorem}\label{thm:3} Let $\cA$ be a nonempty convex,
closed and convexly compact subset of a topological vector space and
let $\cB$ be a nonempty convex subset of another topological
vector space. Let furthermore
  \begin{align*}
    \HH:\cA \times \cB & \to (-\infty,\infty)\\
    (A,B)&\mapsto \HH(A,B)
  \end{align*}
 be concave and upper-semicontinuous  in $A \in \cA$ for $B \in \cB$ fixed,
 and convex in $B \in \cB$ for $A \in \cA$ fixed. 

 Then we have the minimax relation
 \begin{equation}\label{eq:58}
   \sup_{A \in \cA} \inf_{B \in \cB} \HH(A,B) =  \inf_{B \in \cB}
   \sup_{A \in \cA} \HH(A,B). 
 \end{equation}
\end{Theorem}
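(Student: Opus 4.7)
The inequality $\sup_{A} \inf_{B} \HH(A,B) \leq \inf_{B} \sup_{A} \HH(A,B)$ is immediate. For the reverse, set $v \set \inf_{B \in \cB} \sup_{A \in \cA} \HH(A,B)$; if $v = -\infty$ there is nothing to prove, so assume $v \in \RR$, fix $\alpha < v$, and aim to produce $A^\star \in \cA$ with $\HH(A^\star, B) \geq \alpha$ for every $B \in \cB$. The level sets
$$L(B) \set \{A \in \cA : \HH(A,B) \geq \alpha\}, \quad B \in \cB,$$
are convex in $\cA$ (by concavity of $\HH(\cdot,B)$), closed in $\cA$ (by upper-semicontinuity) and nonempty (since $\sup_{A}\HH(A,B) \geq v > \alpha$). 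By the convex compactness of $\cA$, it suffices to show that the family $\{L(B) : B \in \cB\}$ enjoys the finite intersection property.

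To this end, fix $B_1,\dots,B_n \in \cB$, write $H(A) \set (\HH(A,B_1),\dots,\HH(A,B_n)) \in \RR^n$, and consider
$$K \set \{H(A) - u : A \in \cA,\, u \in \RR^n_+\},$$
which is a convex subset of $\RR^n$ by concavity of the $\HH(\cdot,B_i)$ and convexity of $\cA$, and satisfies $K - \RR^n_+ = K$. Observe that $\alpha \mathbf{1} \in K$ iff there exists $A \in \cA$ with $\HH(A,B_i) \geq \alpha$ for every $i$, which is exactly FIP for $B_1,\dots,B_n$. Suppose for contradiction $\alpha \mathbf{1} \notin K$; the separation theorem in $\RR^n$ applied to the disjoint convex sets $K$ and $\{\alpha \mathbf{1}\}$ yields $\lambda \in \RR^n \setminus \{0\}$ with $\lambda \cdot y \leq \alpha \sum_i \lambda_i$ for all $y \in K$. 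The stability $K - \RR^n_+ = K$ forces $\lambda \geq 0$ componentwise, so after rescaling $\lambda$ lies in the simplex $\Delta_n = \{\lambda \in \RR^n_+ : \sum_i \lambda_i = 1\}$ and $\sup_A \sum_i \lambda_i \HH(A,B_i) \leq \alpha$. But convexity of $\cB$ gives $B_\lambda \set \sum_i \lambda_i B_i \in \cB$, and convexity of $\HH(A,\cdot)$ yields
$$\sup_{A} \sum_i \lambda_i \HH(A,B_i) \;\geq\; \sup_{A} \HH(A,B_\lambda) \;\geq\; v \;>\; \alpha,$$
a contradiction. Hence FIP holds, $\bigcap_{B \in \cB} L(B) \neq \emptyset$, and any $A^\star$ in this intersection satisfies $\inf_B \HH(A^\star, B) \geq \alpha$; letting $\alpha \uparrow v$ completes the proof.

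The principal difficulty is the finite-intersection step, for which convex compactness is tailor-made: it reduces the infinite intersection over $\cB$ to finite subfamilies, and the finite case is then settled entirely in $\RR^n$ via non-strict separation of a (possibly non-closed) convex set from an external point. No infinite-dimensional minimax exchange is required, and only convexity of $\cA$, concavity of $\HH$ in $A$, and the downward-closedness $K = K - \RR^n_+$ are used to pin down the sign of the separating functional, at which point convexity of $\HH$ in $B$ together with convexity of $\cB$ delivers the contradiction.
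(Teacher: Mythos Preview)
Your proof is correct and follows the same overall architecture as the paper's: reduce the reverse inequality to showing that the closed convex level sets $\{A:\HH(A,B)\geq\alpha\}$ have the finite intersection property, then invoke convex compactness of $\cA$. The differences lie in the execution of the finite step and are worth recording.

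First, for the finite-dimensional reduction the paper cites the Mazur--Orlicz theorem (Simons, Lemma~2.1(b)) to obtain simplex weights $\lambda$ with
\[
\sup_{A\in\cA}\min_{i}\HH(A,B_i)=\sup_{A\in\cA}\sum_i\lambda_i\HH(A,B_i),
\]
whereas you construct the downward-closed convex set $K=\{H(A)-u:A\in\cA,\ u\in\RR^n_+\}\subset\RR^n$ and separate the point $\alpha\mathbf{1}$ from it directly. These are two packagings of the same idea; your version is self-contained and avoids the external reference.

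Second, and more substantively, the paper works with $\alpha$ equal to the right-hand side of~\eqref{eq:58} and therefore must argue that $\sup_{A}\min_i\HH(A,B_i)$ is actually \emph{attained}; this costs an extra paragraph invoking convex compactness a second time (a maximizing sequence, a convergent subnet of convex combinations, and upper-semicontinuity of the minimum). By taking $\alpha$ strictly below $v$ and letting $\alpha\uparrow v$ at the end, you sidestep this attainment issue entirely: nonemptiness of each $L(B)$ is immediate from $\sup_A\HH(A,B)\geq v>\alpha$, and the finite intersection is delivered by the separation contradiction without any limiting argument in $\cA$. This is a genuine simplification.

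One cosmetic point: you write ``assume $v\in\RR$'' after dismissing $v=-\infty$, but the case $v=+\infty$ is not excluded by the hypotheses. Your argument handles it without change (any finite $\alpha$ works), so you may simply replace that phrase by ``assume $v>-\infty$''.
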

\begin{proof}
  It is easy to see that ``$\leq$'' holds true in~\eqref{eq:58}. For
  the proof of ``$\geq$'' we let $\alpha \set \inf_{B \in \cB} \sup_{A
    \in \cA} \HH(A,B)$ and we will show that
   \begin{equation*}
     \cbr{A \in \cA \;:\; \HH(A,B) \geq \alpha}, \quad B  \in \cB,
  \end{equation*}
  is a collection of closed convex subsets of $\cA$ which satisfies
  the finite intersection property. Convex compactness of $\cA$ then
  implies that
  \begin{equation*}
    \bigcap_{B \in \cB} \cbr{A \in \cA \;:\; \HH(A,B) \geq \alpha}
    \not= \emptyset,
  \end{equation*}
  i.e., there is $A^* \in \cA$ such that $\inf_{B \in \cB} \HH(A^*,B)
  \geq \alpha$ and, thus, ``$\geq$'' must hold in~\eqref{eq:58} as
  claimed.

  By upper-semicontinuity and concavity of $\HH$ in its first
  variable, each of the level sets $\cbr{A \in \cA \;:\; \HH(A,B) \geq
    \alpha}$, $B \in \cB$, is closed and convex. To prove the finite
  intersection property consider $B_1,\dots,B_m \in \cB$ and observe
  that by the Mazur-Orlicz Theorem (Lemma~2.1~(b) in~\cite{Simons:98})
  there are weights $\lambda_1,\dots,\lambda_m\geq 0$ with
  $\sum_{i=1}^m \lambda_i=1$ such that
  \begin{equation*}
    \sup_{A \in \cA} \cbr{\HH(A,B_1) \wedge \dots \wedge \HH(A,B_m)} =  
    \sup_{A \in \cA} \cbr{\lambda_1\HH(A,B_1) + \dots +\lambda_m \HH(A,B_m)}. 
  \end{equation*}
  By assumption $\HH(A,.)$ is convex for any $A \in \cA$ and so the
  preceding identity entails
  \begin{equation*}
    \sup_{A \in \cA}\cbr{\HH(A,B_1) \wedge \dots \wedge \HH(A,B_m)} \geq  
    \sup_{A \in \cA} \HH(A,\lambda_1 B_1+ \dots +\lambda_m B_m)] \geq \alpha. 
  \end{equation*}
  The finite intersection property thus follows once we have shown
  that the first supremum is actually attained. So let $\HH^{\wedge}(A) \set
  \HH(A,B_1) \wedge \dots \wedge \HH(A,B_m)$, $A \in \cA$, and
  consider a maximizing sequence $A_1, A_2, \dots \in \cA$ for $
  \sup_{A \in \cA}\HH^{\wedge}(A)$. Because $\cA$ is convexly compact there
  is a convergent subnet of finite convex combinations, i.e., there is
  a convergent net $(A_e)_{e \in E}$ of $A_e = \sum_n \gamma^e_n A_n$
  with $\gamma^e_n=0$ for $n \geq N_e$ and $\sum_n \gamma^e_n =1$ such
  that, in addition, for any $N=1,2,\dots$ there is an $e_N \in E$
  with $\gamma^e_n = 0$, $n=0,\dots,N$, for any $e \succeq e_N$; see
  \citet{Zitkovic:10}, Definition~2.3. By concavity of $\HH$ with
  respect to its first variable, also $\HH^{\wedge}$ is concave and so
  \begin{equation*}
    \HH^{\wedge}(A_e)     \geq \sum_n \gamma^e_n     \HH^{\wedge}(A_n) .
  \end{equation*}
  The upper-semicontinuity of $\HH$ in its first variable entails the
  upper-semi\-continuity of $\HH^{\wedge}$. This allows us to conclude in
  the limit that $A_0 \set \lim_{e \in E} A_e \in \cA$ attains $\sup_{A
    \in \cA} \HH^{\wedge}(A)$.
\end{proof}

\bibliographystyle{imsart-nameyear}
\bibliography{../finance}

\end{document}